\title[Automorphism Groups]{Automorphism Groups of Countably Categorical Linear Orders are Extremely Amenable}
\author{
  Fran\c{c}ois Dorais
  \and Steven Gubkin
  \and Daniel McDonald
  \and Manuel Rivera}
\date{ }
\subjclass[2010]{
Primary: 
06A05, 
20B27; 
Secondary: 
03C35, 
05C55. 
}
\keywords{linear orders; automorphism groups; countable categoricity; extreme amenability; Fra{\"\i}ss{\'e} classes; Ramsey property.}
\thanks{The research of the first author was partly supported by NSF grant DMS-0648208.}
\newcounter{cnt}[section]
\newtheorem{thm}[cnt]{Theorem}
\newtheorem{prp}[cnt]{Proposition}
\newtheorem{lem}[cnt]{Lemma}
\newtheorem{cor}[cnt]{Corollary}
\theoremstyle{definition}
\newcommand*{\term}[1]{{\bfseries#1}}
\newcommand{\IFF}{\Leftrightarrow}
\newcommand{\lthen}{\rightarrow}
\newcommand{\liff}{\leftrightarrow}
\newcommand{\embed}{\hookrightarrow}
\newcommand{\cat}{\mathop{\hat{}}}
\newcommand{\restrict}{{\upharpoonright}}
\newcommand*{\seq}[1]{\left\langle#1\right\rangle}
\newcommand*{\set}[1]{\left\lbrace#1\right\rbrace}
\DeclareMathOperator{\Aut}{Aut}
\DeclareMathOperator{\Age}{Age}
\newcommand{\EQ}{\mathrel{E}}
\newcommand{\NEQ}{\not\mathrel{E}}
\newcommand{\Q}{\mathbb{Q}}
\newcommand{\LO}[1]{\mathbb{L}(#1)}
\newcommand{\LX}[1]{\widehat{\mathbb{L}}(#1)}
\newcommand{\FC}[1]{\mathcal{K}_{#1}}
\begin{document}

\maketitle

\begin{abstract}
  We show that the automorphism groups of countably categorical linear
  orders are extremely amenable. Using methods of Kechris, Pestov, and
  Todorcevic, we use this fact to derive a structural Ramsey theorem
  for certain families of finite ordered structures with finitely many
  partial equivalence relations with convex classes.
\end{abstract}

\section{Introduction}

A \term{countably categorical linear order} is a countable (possibly
finite) linear order $L$ such that every countable linear order which
satisfies the same first-order theory as $L$ (in the language of
linear order) is isomorphic to $L$.  A topological group $G$ is
\term{extremely amenable} if every action of $G$ on a compact
Hausdorff space has a fixed point.  In~\cite{Pestov98}, Pestov showed
that the automorphism group of the linear order of the rationals $\Q$
--- which is a countably categorical linear order --- is extremely
amenable.  In this paper, we generalize this result to the class of
all countably categorical linear orders.

\begin{thm}\label{thm:extreme}
  The automorphism group of a countably categorical linear order is
  extremely amenable.
\end{thm}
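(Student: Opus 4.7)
The plan is to combine the structural classification of countably categorical linear orders with the Kechris--Pestov--Todorcevic (KPT) correspondence. By a classical theorem of Rosenstein, every countably categorical linear order $L$ is obtained from the one-point order by iterating two operations: finite concatenation, and \emph{$\Q$-blowup} $M \mapsto \Q[M]$, where $\Q[M]$ replaces each point of $\Q$ by a copy of $M$. This inductive construction canonically equips $L$ with a finite, nested family of $\Aut(L)$-invariant partial equivalence relations whose classes are convex in $L$.

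My first step is to expand $L$ to a structure $\widetilde{L}$ in a finite relational language by adjoining predicate symbols for these partial equivalence relations. One checks directly from the construction that $\widetilde{L}$ is ultrahomogeneous, hence the Fra\"{\i}ss\'e limit of its age $\FC{L}$, a class of finite linearly ordered structures carrying convex partial equivalence relations of precisely the sort announced in the abstract. Because the partial equivalence relations are $\emptyset$-definable in $L$, we have $\Aut(\widetilde{L}) = \Aut(L)$, so it suffices to prove that $\Aut(\widetilde{L})$ is extremely amenable. The structures in $\FC{L}$ are automatically rigid since they are linearly ordered, so by the KPT correspondence this reduces to showing that $\FC{L}$ has the \emph{Ramsey property}.

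The remaining, and principal, task is to establish the Ramsey property of $\FC{L}$, which I would carry out by induction on the depth of the construction of $L$. The base case is trivial, and the case of a finite concatenation follows from a product-style Ramsey argument together with the classical finite Ramsey theorem. The main obstacle, where I expect the real work to lie, is the $\Q$-blowup step: deducing the Ramsey property for $\FC{\Q[M]}$ from that of $\FC{M}$. I expect this to go through a Ne\v{s}et\v{r}il--R\"odl-style partite amalgamation, in which the classical (ordered) Ramsey theorem handles colorings transverse to the newly introduced equivalence classes while the inductive hypothesis handles colorings within each class. The convexity of the classes is what allows these two levels of coloring to be controlled independently, and assembling the resulting partite system into a single object in $\FC{\Q[M]}$ is the delicate step. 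Once the Ramsey property is in hand for every such $L$, Theorem~\ref{thm:extreme} follows immediately from KPT.
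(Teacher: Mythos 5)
Your overall architecture (Rosenstein's classification, expansion by convex partial equivalence relations, ultrahomogeneity, and the KPT correspondence) matches the paper's, but you run the KPT equivalence in the opposite, and much harder, direction, and the central step of your argument is not actually carried out. The paper proves extreme amenability of $\Aut(\LX{T})$ \emph{directly}, by induction on the tree presentation: the sum step gives a finite product of automorphism groups, the shuffle step gives a semidirect product $H_k \ltimes \bigl(\Aut(\LX{T_1}) \times\cdots\times \Aut(\LX{T_k})\bigr)^\omega$ with $H_k$ the group of order automorphisms of $\Q_1\cup\cdots\cup\Q_k$ preserving each piece, and extreme amenability is closed under products and semidirect products. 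The Ramsey property of $\FC{T}$ is then a \emph{corollary} via KPT, not an ingredient. You instead propose to prove the Ramsey property first, by a Ne\v{s}et\v{r}il--R\"odl-style partite amalgamation for the $\Q$-blowup step, and you explicitly defer that argument (``I expect this to go through\ldots'', ``the delicate step''). That is precisely the hard combinatorial content that the KPT machinery is designed to let one avoid; as written, your proof has a hole exactly where the real work lies. (For comparison, even the single instance $\FC{\sigma(\sigma(1))}$ of your claimed Ramsey theorem is a nontrivial partition theorem of Rado.) Until the partite amalgamation argument is actually constructed and verified for an arbitrary shuffle $\sigma(T_1,\dots,T_k)$ of already-treated presentations, the proof is incomplete.

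A second, smaller gap: you assert that the inductive construction ``canonically'' equips $L$ with $\Aut(L)$-invariant relations, so that $\Aut(\widetilde{L}) = \Aut(L)$. This is not automatic: a given countably categorical order has many sum-shuffle presentations, and for most of them the induced relations are \emph{not} automorphism-invariant (e.g.\ presenting $\Q$ as $s(\sigma(1),\sigma(1))$ introduces a cut that no automorphism need preserve). One must exhibit a \emph{canonical} presentation, which the paper does by iterating Rosenstein's finite and label condensations and tracking the induced isomorphisms of automorphism groups at each stage; this occupies an entire section of the paper. Your appeal to $\emptyset$-definability would indeed close this gap once invariance is known (via Ryll-Nardzewski, since $L$ is countably categorical), but invariance is exactly what has to be proved, and it requires choosing the presentation correctly rather than taking an arbitrary one produced by the induction.
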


Our proof of this theorem uses a characterization of the countably categorical
linear orders due to Rosenstein~\cite{Rosenstein69}.  Given an indexed
family $\seq{\tau_i}_{i \in I}$ of linear order types where the index
set $I$ is a linear order, the generalized sum $\sum_{i \in I} \tau_i$
is the order type of the order obtained by concatenating intervals of type
$\tau_i$ along the ordering given by $I$. Two special types of
generalized sums will be of interest to us:
\begin{description}
\item[Finite sums] $s(\tau_1,\dots,\tau_n) = \sum_{i \in
    \set{1,\dots,n}} \tau_i$ where $\set{1,\dots,n}$ has the usual
  ordering.
\item[Finite shuffles] $\sigma(\tau_1,\dots,\tau_n) = \sum_{q \in \Q}
  \tau_q$ where each $D_i = \set{ q \in \Q : \tau_q = \tau_i}$ is a
  dense subset of $\Q$ and $\Q = D_1 \cup \cdots \cup D_n$.
\end{description}
See~\cite{Rosenstein82} for more on these operations.  It turns out
that these two operations generate all countably categorical linear
order types.

\begin{thm}[Rosenstein~\cite{Rosenstein69}]\label{thm:rosenstein}
  The class of countably categorical linear order types is the
  smallest class of linear order types that contains the order type
  $1$ and is closed under finite sums and finite shuffles.
\end{thm}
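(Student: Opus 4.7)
The plan is to prove the two inclusions separately. For the ``easy'' direction, I would induct on the construction and show that every linear order built from the singleton type $1$ by finite sums and finite shuffles is countably categorical. The base case is immediate since $1$ has a unique model up to isomorphism. For a finite sum $s(\tau_1,\dots,\tau_k)$ of countably categorical pieces, a complete $n$-type over the sum is determined by a choice of how the $n$ coordinates distribute among the summands together with a complete type within each piece, so only finitely many $n$-types exist and Ryll-Nardzewski applies. For a finite shuffle $\sigma(\tau_1,\dots,\tau_k)$, the cleanest route is a Cantor-style back-and-forth argument: any countable model of its theory is again a shuffle of orders with the same theories as the $\tau_i$ (first-order statements express that each ``color'' is densely interleaved with the others), so one extends a finite partial isomorphism by picking an interval of the correct color and matching its interior using the inductive hypothesis.

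For the converse, let $L$ be a countably categorical linear order. By the Ryll-Nardzewski theorem, $\Aut(L)$ has only finitely many orbits on each $L^n$, and in particular only finitely many orbits on $L$ itself. The plan is to induct on a well-founded complexity measure for such orders (for instance Hausdorff rank, or a finer invariant read off from the type spaces). Let $p_1,\dots,p_k$ enumerate the $1$-types realized in $L$, giving a partition of $L$ into $\Aut(L)$-invariant color classes $P_i$. One then studies the coarsest $\Aut(L)$-invariant convex equivalence relation on $L$ whose classes are proper. If this relation has finitely many classes, those classes are themselves countably categorical convex suborders and $L$ is exhibited as their finite sum. Otherwise the classes are densely arranged and only their color survives under $\Aut(L)$, so $L$ decomposes as a finite shuffle of the class order types.

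The main obstacle is justifying the inductive step in this converse direction. One must verify that the pieces appearing in the decomposition are themselves countably categorical and are strictly simpler than $L$ under the chosen measure, which requires controlling how complete types on a convex piece relate to complete types on $L$, and in particular that the color information on pieces is definably coded. A secondary technical point is that countable categoricity forces the number of colors arising in any shuffle to be finite, keeping us inside the intended inductive class. Once these are in place, iterating the decomposition terminates by well-foundedness of the complexity measure, and $L$ is exhibited as constructed from $1$ by finitely many applications of finite sums and shuffles.
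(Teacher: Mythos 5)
First, a point of comparison: the paper does not prove this statement at all --- it is an attributed citation of Rosenstein's 1969 result and is used as a black box. The closest the paper comes is the condensation machinery of Section~\ref{sec:canonical}, which itself defers the key finiteness and termination facts to~\cite[Theorem~8.40]{Rosenstein82}. So your proposal must stand on its own, and while its overall shape (Ryll-Nardzewski for closure, alternating condensations for the converse) matches Rosenstein's actual strategy, as written it is an outline with genuine gaps precisely where the content of the theorem lives.

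Concretely: in the forward direction the sum case is essentially fine (same distribution plus same types in each countably categorical summand yields automorphic tuples, so finitely many $n$-types), but the shuffle case silently assumes that the ``color'' of a point --- which $\tau_i$-block it lies in --- is first-order recoverable in an arbitrary countable model of the theory; the colors are not in the language, and establishing that block membership and the dense interleaving are reflected in the type structure is the real work there. In the converse direction you explicitly leave the inductive step as an ``obstacle,'' but that step is the theorem: one must show (i) the condensation classes are themselves countably categorical (relating types over a convex piece to types over $L$), (ii) when the classes are densely arranged, each of the finitely many class-types actually occurs densely, so that the decomposition really is a shuffle, and (iii) a well-founded measure strictly decreases, so the recursion terminates. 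There is also a flaw in the setup itself: the ``coarsest $\Aut(L)$-invariant convex equivalence relation with proper classes'' need not exist. For $L = \Q + 1 + \Q$, the relation with classes $\Q \cup \{m\}$ and $\Q$, and the relation with classes $\Q$ and $\{m\} \cup \Q$ (where $m$ is the fixed middle point), are both invariant, convex, and proper, yet their join is the improper full relation. This is exactly why Rosenstein alternates the finite condensation with a label condensation rather than taking a single coarsest relation, as reproduced in Section~\ref{sec:canonical}. Until these points are supplied, the proposal is a plausible plan rather than a proof.
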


\noindent
Therefore, every countably categorical linear order type can be
described by a sequence of applications of sums $s(T_1,\dots,T_k)$ and
shuffles $\sigma(T_1,\dots,T_k)$ of arbitrary arity --- a
\term{sum-shuffle expression}.  In general, a countably categorical
order type will have many sum-shuffle expressions, for example, the
expressions $\sigma(1)$, $s(\sigma(1),\sigma(1))$,
$\sigma(\sigma(1))$, and $\sigma(s(1,\sigma(1)))$ all represent the
order type of the rational numbers. One of our main results
(Theorem~\ref{thm:gubkincan}) shows every countably categorical linear
order has a canonical sum-shuffle expression that best captures
the structure of the linear order.

To prove Theorem~\ref{thm:extreme}, we will first associate to each
sum-shuffle expression $T$ a linear order $\LX{T}$ of type $T$
expanded with finitely many relation symbols such that $\Aut(\LX{T})$
is easily computed and shown to be extremely amenable
(Theorem~\ref{thm:gubkinaut}). Next we will show that every countably
categorical linear order has a canonical sum-shuffle expression $T$
such that $\Aut(\LO{T}) = \Aut(\LX{T})$, where $\LO{T}$ is the
underlying linear order of $\LX{T}$ (Theorem~\ref{thm:gubkincan}).

The analysis of $\LX{T}$ will also show that $\LX{T}$ can be seen as
the limit of a Fra{\"\i}ss{\'e} class $\FC{T}$ of finite ordered
structures. As a consequence of general results of Kechris, Pestov and
Todorcevic~\cite{KechrisPestovTodorcevic05}, it follows that these
Fra\"{\i}ss\'{e} classes $\FC{T}$ all have the Ramsey property. This
gives an infinite family of structural Ramsey theorems
(Corollary~\ref{cor:ramseyprop}).

Some instances of this family of structural Ramsey theorems correspond
to existing results. First, the structural Ramsey theorem for the
class $\FC{\sigma(1)}$ is nothing but a thinly disguised form of
Ramsey's theorem~\cite{Ramsey30}. The case $\FC{\sigma(\sigma(1))}$
equivalent to a partition theorem of Rado~\cite{Rado54} (see
also~\cite[Corollary~6.8]{KechrisPestovTodorcevic05}).  More
generally, the case $\FC{\sigma^n(1)}$ corresponds to the fact that
the Fra{\"\i}ss{\'e} order class $\mathcal{U}_S^{c{<}}$ of finite
convexly ordered ultrametric spaces with distances in a fixed
$n$-element subset $S$ of $(0,\infty)$ has the Ramsey property, which
is a result of Nguyen Van Th{\'e}~\cite{NguyenVanThe09}. Thus the classes
$\FC{T}$ are combinatorially very rich and can be used to encode a
variety of natural Fra{\"\i}ss{\'e} order classes with the Ramsey
property.

\section{Tree Presentations and Coordinatization}\label{sec:axioms}

While sum-shuffle expression are easy to understand, we will find it
more convenient to work with parse trees for such expressions. These
parse trees will be represented as trees of sequences of positive
integers with labels from $\set{\ell,s,\sigma}$, for \term{leaf},
\term{sum}, \term{shuffle}, respectively. Formally, we define
\term{tree presentations} via the following inductive rules.
\begin{itemize}
\item $1$ is a tree presentation that consists only of the root
  $\seq{}$, with label $\ell$.
\item If $T_1,\dots,T_k$ are tree presentations, then so is
  $s(T_1,\dots,T_k)$ whose nodes are the root $\seq{}$, with label
  $s$, and nodes of the form $\seq{i}\cat{t}$ for $t \in T_i$
  (including the root $t = \seq{}$) and $i \in \set{1,\dots,k}$, with
  the same label as that of $t$ in $T_i$.
\item If $T_1,\dots,T_k$ are tree presentations, then so is
  $\sigma(T_1,\dots,T_k)$ whose nodes are the root $\seq{}$, with
  label $\sigma$, and nodes of the form $\seq{i}\cat{t}$ for $t \in
  T_i$ (including the root $t = \seq{}$) and $i \in \set{1,\dots,k}$,
  with the same label as that of $t$ in $T_i$.
\end{itemize}
Note that only \emph{leaf nodes} (childless nodes of the tree) have label
$\ell$ and all remaining nodes have label $s$ or $\sigma$. If $T$ is a
tree presentation, we say that a linear order $L$ has \emph{type $T$} if
the order-type of $L$ is the evaluation of the sum-shuffle expression
corresponding to $T$.

For each tree presentation $T$, we construct a canonical linear order
$\LO{T}$ with type $T$. To do this, we fix, once and for all, a
partition $\seq{\Q_n}_{n=1}^{\infty}$ of $\Q$, each part of which is
dense in $\Q$. This way, for each $n \geq 1$, we will have a canonical
dense partition $\seq{\Q_1,\dots,\Q_n}$ of the dense linear order
$\Q_{[n]} = \Q_1 \cup\cdots\cup \Q_n$.  For convenience, let us
further require that $n \in \Q_n$ for each positive integer $n$. For
each rational $q \in \Q$, let $\#(q)$ be the unique positive integer
such that $q \in \Q_{\#(q)}$. Note that $\#(n) = n$ for every positive
integer $n$.

The canonical linear order $\LO{T}$ will be a
lexicographically ordred set of finite sequences of rationals which
is \emph{prefix-free} (no element of the set is an initial segment of another).
To determine whether a sequence $\bar{r} = \seq{r_0,\dots,r_{k-1}}$ of
rationals belongs to $\LO{T}$, write $\#(\bar{r}) =
\seq{\#(r_0),\dots,\#(r_{k-1})}$, then check that $\#(\bar{r})$ is a
leaf-node of $T$ and, for each $i < k$, make sure that if
$\#(\bar{r})\restrict i$ is a sum-node of $T$ then $r_i =
\#(r_i)$. (There is nothing further to check when
$\#(\bar{r})\restrict i$ is a shuffle-node.)

\begin{prp}\label{prp:mcdonaldcrd}
  For each tree presentation $T$, $\LO{T}$ is a linear order of
  type~$T$.
\end{prp}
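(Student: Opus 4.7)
The plan is to proceed by structural induction on the tree presentation $T$, following the three formation rules. The base case $T = 1$ is immediate: the unique node $\seq{}$ is a leaf, so the only sequence $\bar r$ with $\#(\bar r) = \seq{}$ is empty, giving $\LO{1} = \set{\seq{}}$, a one-element order of type $1$.

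For $T = s(T_1,\dots,T_k)$, the first step is to exhibit the decomposition $\LO{T} = \bigsqcup_{i=1}^{k} \set{\seq{i}\cat \bar r : \bar r \in \LO{T_i}}$. The key observation is that because the root has label $s$, any $\bar r \in \LO{T}$ must begin with $r_0 = \#(r_0) \in \set{1,\dots,k}$; after stripping this prefix, the leaf- and sum-node checks on the remaining suffix translate precisely into the defining conditions for membership in $\LO{T_{r_0}}$, since the label of a node $\seq{r_0}\cat t$ in $T$ agrees by construction with the label of $t$ in $T_{r_0}$. Lexicographically the blocks appear in the integer order of $r_0$, and within each block the order is isomorphic to $\LO{T_{r_0}}$, so by the inductive hypothesis $\LO{T}$ has order type $T$. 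The shuffle case $T = \sigma(T_1,\dots,T_k)$ is parallel, except the root is now a shuffle-node, so no constraint $r_0 = \#(r_0)$ is imposed; instead $r_0$ ranges over all of $\Q_{[k]}$, yielding the decomposition $\LO{T} = \bigsqcup_{q \in \Q_{[k]}} \set{\seq{q}\cat \bar r : \bar r \in \LO{T_{\#(q)}}}$. Lexicographically this realizes the generalized sum indexed by $q \in \Q_{[k]}$ of the types of $\LO{T_{\#(q)}}$, and since $\Q_{[k]} \cong \Q$ is partitioned by the dense sets $\Q_1,\dots,\Q_k$, this matches the definition of the shuffle, so the inductive hypothesis again delivers the claimed type.

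The proof is essentially a careful unpacking of definitions, so the main task is bookkeeping rather than conceptual insight. The one point needing attention is the translation, under prefix-stripping, between the conditions defining $\LO{T}$ and those defining $\LO{T_i}$; this rests on the observation that leaf-nodes (respectively sum-nodes) of $T$ at depth $\ge 1$ correspond exactly to leaf-nodes (respectively sum-nodes) of the appropriate subtree $T_i$, which is immediate from the inductive construction. Prefix-freeness of $\LO{T}$ then follows automatically from prefix-freeness of each $\LO{T_i}$ together with the disjointness of the blocks in the decomposition.
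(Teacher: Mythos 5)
Your proof is correct. The paper states this proposition without proof, treating it as immediate from the construction, and your structural induction on $T$ --- decomposing $\LO{T}$ into blocks according to the first coordinate and translating the membership conditions under prefix-stripping --- is exactly the routine verification the authors leave implicit.
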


By construction, $\LO{T}$ has some structural properties that are not
always captured by the order relation alone. For each $t \in T$, let
the \term{$t$-domain $D_t$} be the set of all elements $\bar{r}$ of
$\LO{T}$ such that the leaf-node $\#(\bar{r})$ extends the node $t$ in
$T$. In particular, $D_{\seq{}} = \LO{T}$ where $\seq{}$ is the root
of $T$.

Each element $\bar{r}$ of $D_t$ is contained in a unique
\term{$t$-interval}: the maximal interval $I$ of $\LO{T}$ such that
$\bar{r} \in I \subseteq D_t$. In fact, it is easy to see that for
each $\bar{r} \in D_t$, the $t$-interval containing $\bar{r}$ is
\begin{equation*}
  I_t(\bar{r}) =
  \set{\bar{s} \in \LO{T} : \bar{s}\restrict|t| = \bar{r}\restrict|t|}.
\end{equation*}
In particular, $I_{\seq{}}(\bar{r}) = \LO{T}$ for every $\bar{r} \in
D_{\seq{}} = \LO{T}$, and if $t$ is a leaf-node of $T$, then
$I_t(\bar{r}) = \set{\bar{r}}$ for every $\bar{r} \in D_t$.

We will now enumerate a list of universal axioms for a theory that
will capture the fine structure of $\LO{T}$. In addition to the order
${<}$ and equality ${=}$ relations, we expand our language to contain
one binary relation ${\EQ}_t$ for each node $t$ of $T$. The intended
interpretation in $\LO{T}$ is $\bar{r} \EQ_t \bar{s}$ if and only if $\bar{r}$
and $\bar{s}$ belong to the same $t$-interval; let $\LX{T}$ denote
$\LO{T}$ expanded with these binary relations.  These relations
$\EQ_t$ are \emph{partial equivalence relations} (symmetric,
transitive, but not necessarily reflexive relations) with convex
equivalence classes. To talk about $t$-domains in the language of
$\LX{T}$, simply note that $\bar{r} \in D_t \IFF \bar{r} \EQ_t
\bar{r}$ since $\bar{r} \in D_t$ if and only if $\bar{r}$ belongs to
some $t$-interval of $\LO{T}$.

The universal axioms characterizing $\LX{T}$ are naturally divided
into five groups.

\begin{enumerate}[\bfseries(T1)]
\item \label{tax:equiv} For each node $t$ of $T$, the following are
  axioms:
  \begin{gather*}
    x \EQ_t y \lthen y \EQ_t x, \\
    x \EQ_t y \land y \EQ_t z \lthen x \EQ_t z, \\
    x \EQ_t y \land x < z \land z < y \lthen x \EQ_t z \land z \EQ_t
    y.
  \end{gather*}
  In other words, every ${\EQ_t}$ is a partial equivalence relation
  with convex classes (the $t$-intervals).
\item \label{tax:root} The following is an axiom:
  \begin{equation*}
    x \EQ_{\seq{}} y
  \end{equation*}
  In other words, there is a unique $\seq{}$-interval which consists
  of every point.
\item \label{tax:leaf} For every leaf $t$ of $T$, the following is an
  axiom:
  \begin{equation*}
    x \EQ_t y \lthen x = y
  \end{equation*}
  In other words, $t$-intervals consist of only one point.
\item \label{tax:child} If $t$ is a sum or shuffle node with $k$
  children in $T$, then the following are axioms:
  \begin{gather*}
    x \EQ_t x \liff x \EQ_{t\cat\seq{1}} x \lor \cdots \lor x \EQ_{t\cat\seq{k}} x,\\
    x \NEQ_{t\cat\seq{i}} x \lor x \NEQ_{t\cat\seq{j}} x, \quad
    \text{for $1 \leq i < j \leq k$.}
  \end{gather*}
  In other words, the $t$-domain is the disjoint union of the
  $t\cat\seq{i}$-domains.
\item \label{tax:sum} If $t$ is a sum node with $k$ children in $T$,
  then the following are axioms:
  \begin{gather*}
    x \EQ_t y \land x \EQ_{t\cat\seq{i}} x \land y \EQ_{t\cat\seq{i}}
    y \lthen x \EQ_{t\cat\seq{i}} y, \\
    x \EQ_t y \land x \EQ_{t\cat\seq{i}} x \land y \EQ_{t\cat\seq{j}}
    y \lthen x < y, \quad \text{for $1 \leq i < j \leq k$.}
  \end{gather*}
  In other words, each $t$-interval is a finite union of consecutive
  $t\cat\seq{i}$-intervals.
\end{enumerate}
It is a simple matter to check that $\LX{T}$ satisfies all of these
axiom groups.  More importantly, these axioms characterize the
substructures of $\LX{T}$.

\begin{prp}\label{prp:mcdonaldext} 
  A countable structure $L$ satisfies the axioms of
  {\upshape(T\ref{tax:equiv}--T\ref{tax:sum})} if and only if it is
  embeddable into $\LX{T}$. In fact, if $K$ is a finite substructure
  of $L$, then any embedding $K\embed\LX{T}$ can be extended to an
  embedding $L\embed\LX{T}$.
\end{prp}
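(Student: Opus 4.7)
The reverse implication is immediate: axioms (T\ref{tax:equiv}--T\ref{tax:sum}) are universal and $\LX{T}$ satisfies each of them. The extension clause implies the forward direction by taking $K=\emptyset$, so the whole proof reduces to the extension property. Enumerating $L\setminus K$ and extending one element at a time, the essential content is a one-point extension: given an embedding $\pi\colon K\embed\LX{T}$ and $a\in L\setminus K$, I will produce $\bar r_a\in\LO{T}$ such that $\pi$ extended by $a\mapsto\bar r_a$ is still an embedding.

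First, I locate where $\bar r_a$ should live in the parse tree. Axioms (T\ref{tax:root}), (T\ref{tax:child}), and transitivity in (T\ref{tax:equiv}) force $\set{t\in T : a\EQ_t a}$ to be a rooted branch through $T$, and (T\ref{tax:leaf}) combined with (T\ref{tax:child}) pushes it down to a unique leaf $\ell(a)$. For each $b\in K$, convexity in (T\ref{tax:equiv}) makes $\set{t : a\EQ_t b}$ a chain of common ancestors of $\ell(a)$ and $\ell(\pi(b))$, topped by a well-defined node $u(a,b)$, necessarily a non-leaf (else (T\ref{tax:leaf}) would force $a=b$).

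The sequence $\bar r_a=\seq{r_0,\dots,r_{k-1}}$ with $k=|\ell(a)|$ is built coordinate by coordinate so that $\#(\bar r_a)=\ell(a)$. At step $i$, only those $b\in K$ with $|u(a,b)|\geq i$ contribute. If $\ell(a)\restrict i$ is a sum-node, then $r_i=\ell(a)(i)$ is forced; consistency for $b$ with $|u(a,b)|>i$ is automatic, and for $b$ with $|u(a,b)|=i$, axiom (T\ref{tax:sum}) applied in $L$ puts the $L$-order of $a,b$ in agreement with the order of the child indices $\ell(a)(i),\ell(\pi(b))(i)$, which is exactly the order in $\LX{T}$. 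If $\ell(a)\restrict i$ is a shuffle-node, then $r_i$ must lie in $\Q_{\ell(a)(i)}$: any $b$ with $|u(a,b)|>i$ pins $r_i$ to a common rational (common across such $b$'s by transitivity), while each $b$ with $|u(a,b)|=i$ demands $r_i<r_{\pi(b),i}$ or $r_i>r_{\pi(b),i}$ as dictated by the $L$-order. These strict inequalities carve out a nonempty open interval in $\Q$, whose intersection with the dense set $\Q_{\ell(a)(i)}$ supplies the sought $r_i$.

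The main subtle point is verifying consistency of the shuffle-node constraints: I must show that competing order demands from different $b$'s, and any clash between a pinned value and an ordering constraint, are compatible. Both are forced by linearity of $<$ on $L$ and convexity in (T\ref{tax:equiv}): for the latter clash, if $|u(a,b)|>i$ and $|u(a,b')|=i$, then $a$ and $b$ share a $(\ell(a)\restrict(i+1))$-interval that is convex and does not contain $b'$, so $a$ and $b$ lie on the same side of $b'$ in $L$, making the pinned value $r_{\pi(b),i}$ automatically correct for $a$ too. Once $\bar r_a$ is built, it lies in $\LO{T}$ by construction (the sum-node conditions are met at each coordinate, and $\#(\bar r_a)=\ell(a)$ is a leaf), and a routine case split on $|u(a,b)|$ confirms that the extended map preserves ${<}$ and each ${\EQ}_t$.
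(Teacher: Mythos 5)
Your proposal is correct and follows essentially the same route as the paper: reduce to a one-point extension, locate the unique leaf via (T\ref{tax:root}) and (T\ref{tax:child}), then build the coordinate sequence left to right, forcing the value at sum-nodes, pinning it at shuffle-nodes where some $b\in K$ agrees one level deeper, and otherwise using density of $\Q_{\ell(a)(i)}$ to satisfy the order constraints. The consistency issue you flag at shuffle-nodes (pinned values versus ordering demands, resolved by convexity and linearity) is exactly the point the paper's verification phase handles, so there is no gap.
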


\begin{proof}
  It suffices to show that if $x \in L$, $K$ is a finite substructure
  of $L$, and $e:K\embed\LX{T}$ is an embedding, then $e$ can be
  extended to an embedding $\bar{e}:K \cup \{x\}\embed\LX{T}$.

  By~(T\ref{tax:child}), we see that there is a unique leaf-node $t$
  of $T$ such that $D_t(x)$.  Let $\ell = |t|$ and define the
  coordinates $\bar{e}(x)(0),\dots,\bar{e}(x)(\ell-1)$ in order as
  follows.
  \begin{itemize}
  \item If $t\restrict{i}$ is a sum-node, then $\bar{e}(x)(i)$ must match
  the $i$-th coordinate of $t$.
  \item If $t\restrict{i}$ is a shuffle-node and there is a $y \in K$
    such that $x \EQ_{t\restrict(i+1)} y$, then $\bar{e}(x)(i)$ must
    match $e(y)(i)$.
  \item If $t\restrict{i}$ is a shuffle-node and there is no $y \in K$
    such that $x \EQ_{t\restrict(i+1)} y$, then we are free to choose any
    $\bar{e}(x)(i) \in \Q_{t(i)}$ such that $\bar{e}(x)(i) < e(z)(i)
    \IFF x < z$ for all $z \in K$ such that $x \EQ_{t\restrict{i}} z$.
  \end{itemize}
  Since $e:K\embed\LX{T}$ is an embedding, any choice of $y$
  in the second case will give the same value for $\bar{e}(x)(i)$.
  Similarly, in the third case, the fact that $\Q_{t(i)}$ is dense ensures that a
  suitable value for $\bar{e}(x)(i)$ can always be found.

  This completes the definition of the extended map
  $\bar{e}:K\cup\{x\}\embed\LX{T}$; we need to check that this is
  indeed an embedding, i.e., that $\bar{e}$ preserves the partial
  equivalence relations ${\EQ_{t'}}$ and the order relation ${<}$.
  
  \textit{Preservation of the partial equivalence relations.}  Since
  $e = \bar{e} \restrict K$ is known to be an embedding, it suffices
  to check that $x \EQ_{s} y \IFF \bar{e}(x) \EQ_{s} \bar{e}(y)$ for
  every $y \in K$.  First, note that if $s$ is not among
  $t\restrict{0} = \seq{},t\restrict{1},\dots,t\restrict{\ell} = t$,
  then $\lnot D_t(x)$ and hence $\lnot D_{s}(\bar{e}(x))$. Therefore,
  $x \NEQ_{s} y$ and $\bar{e}(x) \NEQ_{s} \bar{e}(y)$ for any $y \in
  K$. So it suffices to show that the relations
  ${\EQ_{t\restrict{i}}}$ are preserved, for $i = 0,\dots,\ell$.

  By~(T\ref{tax:root}) and~(T\ref{tax:child}), we know that for every
  $y \in K$ there is a maximal $i \leq \ell$ such that $x
  \EQ_{t\restrict{i}} y$. It suffices to show that $\bar{e}(x)
  \EQ_{t\restrict{i}} \bar{e}(y)$ and, provided $i < \ell$, that
  $\bar{e}(x) \NEQ_{t\restrict(i+1)} \bar{e}(y)$.

  By~(T\ref{tax:child}), we know that $x \EQ_{t\restrict{j}} y$ for
  every $j < i$. By the above construction, we see that at every stage
  $j < i$ where $t\restrict{j}$ is a sum-node, we picked
  $\bar{e}(x)(j) = t(j) = \bar{e}(y)(j)$.  Also, at every stage $j <
  i$ where $t\restrict{j}$ is a shuffle-node, we explicitly picked
  $\bar{e}(x)(j) = \bar{e}(y)(j)$. Therefore, $\bar{e}(x)(j) =
  e(y)(j)$ for every $j < i$, which implies that $\bar{e}(x)
  \EQ_{t\restrict{i}} \bar{e}(y)$.

  Suppose now that $i < \ell$. We want to show that $\bar{e}(x)
  \NEQ_{t\restrict(i+1)} \bar{e}(y)$. We consider two cases.
  \begin{itemize}
  \item If $t\restrict{i}$ is a sum-node, then the first axiom of
    group (T\ref{tax:sum}) implies that $\lnot
    D_{t\restrict(i+1)}(y)$. Therefore $\lnot
    D_{t\restrict(i+1)}(\bar{e}(y))$, which implies that $\bar{e}(x)
    \NEQ_{t\restrict(i+1)} \bar{e}(y)$.
  \item If $t\restrict{i}$ is a shuffle node, then either $x
    \EQ_{t\restrict(i+1)} z$ for some $z \in K$, and $\bar{e}(x)
    \EQ_{t\restrict(i+1)} \bar{e}(z)$ by definition of
    $\bar{e}(x)$. However, we must then have $y \NEQ_{t\restrict(i+1)}
    z$ and hence $\bar{e}(y) \NEQ_{t\restrict(i+1)}
    \bar{e}(z)$. Otherwise, $\bar{e}(x)(i)$ was chosen to be either
    strictly smaller or strictly bigger than $\bar{e}(z)(i)$ for every
    $z \in K$ such that $z \EQ_{t\restrict{i}} x$. In particular,
    $\bar{e}(x)(i) \neq \bar{e}(y)(i)$ which means that $\bar{e}(x)
    \NEQ_{t\restrict(i+1)} \bar{e}(y)$.
  \end{itemize}

  \textit{Preservation of the order relation.}  Since $e = \bar{e}
  \restrict K$ is known to be an embedding and the order relation on
  $\LX{T}$ is total, it suffices to check that $x = y \IFF \bar{e}(x)
  = \bar{e}(y)$ and $x < y \IFF \bar{e}(x) < \bar{e}(y)$ for every $y
  \in K$.  By (T\ref{tax:root}), (T\ref{tax:leaf}), and
  (T\ref{tax:child}) it is true that $x=y \IFF x \EQ_{t\restrict{i}}
  y$ for every $i = 0,\dots,\ell$.  Since $\bar{e}$ is known to
  preserve the partial equivalence relations, for every $y \in K$ we
  have $x=y \IFF \bar{e}(x) = \bar{e}(y)$ and $x\neq y$ if and only if the
  maximal $i$ such that $x \EQ_{t\restrict{i}} y$ satisfies $i <
  \ell$.
  
  Suppose $x<y$.  Let $i$ be the maximal number less than $\ell$ such
  that $x \EQ_{t\restrict{i}} y$ and let
  $(t\restrict{i})\cat\seq{1},\dots,(t\restrict{i})\cat\seq{k}$ be all
  the children of $t\restrict{i}$.  Thus by (T\ref{tax:child}) we can
  let $(t\restrict{i})\cat\seq{m}$ and $(t\restrict{i})\cat\seq{n}$ be
  the two distinct children of $t\restrict{i}$ such that
  $D_{(t\restrict{i})\cat\seq{m}}(x)$ (i.e.,
  $t\restrict(i+1)=(t\restrict{i})\cat\seq{m}$) and
  $D_{(t\restrict{i})\cat\seq{n}}(y)$.
  
  If $t\restrict{i}$ is a sum-node then $m<n$ by (T\ref{tax:sum}).
  Since $\bar{e}$ is known to preserve the partial equivalence
  relations, $\bar{e}(x) \EQ_{t\restrict{i}} \bar{e}(y) \land
  D_{(t\restrict{i})\cat\seq{m}}(\bar{e}(x)) \land
  D_{(t\restrict{i})\cat\seq{n}}(\bar{e}(y))$.  Thus $\bar{e}(x) <
  \bar{e}(y)$ by (T\ref{tax:sum}).
   
  If $t\restrict{i}$ is a shuffle-node then by our construction
  $\bar{e}(x)(j) = \bar{e}(y)(j)$ for all $j=0,\dots,i-1$ but
  $\bar{e}(x)(i) < \bar{e}(y)(i)$.  Therefore $\bar{e}(x) <
  \bar{e}(y)$ because the ordering is lexicographic.
\end{proof}

\section{Extreme Amenability of $\Aut(\LX{T})$}\label{sec:extreme}

In this section, we will establish the first step in the proof of
Theorem~\ref{thm:extreme}.

\begin{thm}\label{thm:gubkinaut} 
  For every tree presentation $T$, the automorphism group of $\LX{T}$
  is extremely amenable.
\end{thm}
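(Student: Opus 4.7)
My plan is to proceed by induction on the tree presentation $T$, paralleling its inductive construction. The base case $T=1$ is trivial, since $\LX{1}$ is a single point and $\Aut(\LX{1})$ is the trivial group. For a sum node $T=s(T_1,\dots,T_k)$, axioms (T\ref{tax:child}) and (T\ref{tax:sum}) together force every automorphism of $\LX{T}$ to preserve the $k$ top-level intervals setwise and in order, and to act on each as an automorphism of $\LX{T_i}$, yielding a topological isomorphism $\Aut(\LX{T})\cong\prod_{i=1}^{k}\Aut(\LX{T_i})$. Since a finite product of extremely amenable groups is extremely amenable (the joint fixed-point set on any compact flow is the intersection of the individual fixed-point sets), the inductive hypothesis closes this case.

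The main work is the shuffle node $T=\sigma(T_1,\dots,T_k)$. Here the top-level intervals of $\LX{T}$ are indexed by the first coordinates $r_0\in\Q$, and the interval at $r_0\in\Q_i$ carries the structure of $\LX{T_i}$. Every automorphism preserves the partial equivalence relations $\EQ_{\seq{i}}$ and the order, so it permutes intervals within each color class $i$ in an order-preserving way while acting as an automorphism of $\LX{T_i}$ on each interval. I would package this as a short exact sequence of Polish groups
\[
  1 \to N \to \Aut(\LX{T}) \to G_0 \to 1,
\]
where $N=\prod_{i=1}^{k}\Aut(\LX{T_i})^{\Q_i}$ is the pointwise stabilizer of the interval partition and $G_0$ is the color-preserving order automorphism group of $(\Q,<,\Q_1,\dots,\Q_k)$. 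Each $\Aut(\LX{T_i})$ is extremely amenable by induction, and an arbitrary product of extremely amenable groups remains extremely amenable (the dense finitely supported subgroup has fixed-point sets with the finite intersection property on any compact flow), so $N$ is extremely amenable. The group $G_0$ is the automorphism group of the Fra{\"\i}ss{\'e} limit of the class of finite $k$-colored linear orders, which has the Ramsey property by a direct reduction to classical Ramsey, so the Kechris--Pestov--Todorcevic theorem yields that $G_0$ is extremely amenable. An extension of extremely amenable by extremely amenable is then extremely amenable: for any compact $\Aut(\LX{T})$-flow $X$, the fixed set $X^N$ is nonempty, compact, and $G_0$-invariant, so the induced $G_0$-action has a fixed point, which is a fortiori fixed by all of $\Aut(\LX{T})$.

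The main obstacle is the setup of the short exact sequence in the shuffle case: one must verify that $N$ is a closed normal subgroup whose subspace topology matches the product topology on $\prod_i \Aut(\LX{T_i})^{\Q_i}$, and that the quotient $\Aut(\LX{T})/N$ is topologically isomorphic to $G_0$ (upgrading a continuous bijection into an isomorphism via the open mapping theorem for Polish groups). The surjectivity of the quotient onto $G_0$ — the fact that every color-preserving permutation of the interval skeleton genuinely lifts to an automorphism of $\LX{T}$ — together with the identification of each interval with a copy of $\LX{T_i}$ is supplied by the extension property in Proposition~\ref{prp:mcdonaldext}. If one wishes to avoid any appeal to Kechris--Pestov--Todorcevic at this stage, the extreme amenability of $G_0$ can alternatively be obtained by a direct Pestov-style fixed-point argument mimicking the proof for $\Aut(\Q)$, keeping the proof of Theorem~\ref{thm:gubkinaut} logically independent of Corollary~\ref{cor:ramseyprop}.
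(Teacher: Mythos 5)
Your proposal is correct and follows essentially the same route as the paper: induction on the tree structure, a direct product decomposition at sum nodes, and at shuffle nodes the decomposition of $\Aut(\LX{\sigma(T_1,\dots,T_k)})$ into the color-preserving order automorphism group of $(\Q,{<},\Q_1,\dots,\Q_k)$ acting on $\prod_{q}\Aut(\LX{T_{\#(q)}})$ --- the paper realizes this as an explicit semidirect product $H_k \ltimes G$ (using the section that acts only on first coordinates, which spares you the open-mapping-theorem step), and cites \cite{KechrisPestovTodorcevic05} for extreme amenability of $H_k$ and for closure under products and semidirect products, where you instead rederive these facts. The differences are only in provenance of the auxiliary lemmas, not in the structure of the argument.
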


We proceed by induction on the structure of $T$. The result is trivial
for $T = 1$ since the automorphism group of $\LX{1}$ is the trivial
group, which is clearly extremely amenable.  To complete the
induction, it suffices to show that if
$\Aut(\LX{T_1}),\dots,\Aut(\LX{T_k})$ are extremely amenable, then so
are $\Aut(\LX{s(T_1,\dots,T_k)})$ and
$\Aut(\LX{\sigma(T_1,\dots,T_k)})$.

To handle sums, we make the following simple observation.

\begin{lem}
  If $T = s(T_1,\dots,T_k)$ then
  \begin{equation*}
    \Aut(\LX{T}) \cong \Aut(\LX{T_1}) \times \cdots \times \Aut(\LX{T_k}).
  \end{equation*}
\end{lem}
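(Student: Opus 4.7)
The plan is to exhibit the isomorphism explicitly from the coordinatized construction of $\LX{T}$ and then read off both directions. By definition of tree presentations and of $\LO{T}$, the underlying set of $\LX{s(T_1,\dots,T_k)}$ is the disjoint union, over $i\in\set{1,\dots,k}$, of the sets $\seq{i}\cat\LX{T_i} = \set{\seq{i}\cat\bar{r} : \bar{r}\in\LX{T_i}}$, and the map $\bar{r}\mapsto\seq{i}\cat\bar{r}$ identifies $\LX{T_i}$ with $D_{\seq{i}}$ as ordered structures in which each relation $\EQ_t$ of $\LX{T_i}$ corresponds to $\EQ_{\seq{i}\cat t}$ of $\LX{T}$.

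Next I would observe that any $\phi\in\Aut(\LX{T})$ preserves each $D_{\seq{i}}$ setwise, since $D_{\seq{i}}=\set{x:x\EQ_{\seq{i}} x}$ is definable from the relation $\EQ_{\seq{i}}$ which $\phi$ preserves; in particular $\phi$ never mixes different blocks (so the sum axioms (T\ref{tax:sum}) never force an ordering conflict). Consequently the restriction $\phi_i=\phi\restrict D_{\seq{i}}$ is an automorphism of the induced substructure on $D_{\seq{i}}$, hence corresponds via the identification above to an automorphism of $\LX{T_i}$. This defines a group homomorphism $\Phi:\Aut(\LX{T})\to\prod_{i=1}^k\Aut(\LX{T_i})$ by $\Phi(\phi)=(\phi_1,\dots,\phi_k)$.

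In the other direction, given $(\psi_1,\dots,\psi_k)\in\prod_i\Aut(\LX{T_i})$, define $\psi$ on $\LX{T}$ by $\psi(\seq{i}\cat\bar{r})=\seq{i}\cat\psi_i(\bar{r})$. I would check that $\psi$ preserves the order (the lexicographic order compares the first coordinate first, so the block ordering $D_{\seq{1}}<\cdots<D_{\seq{k}}$ is preserved automatically, and within each block $\psi_i$ preserves the order) and that it preserves every $\EQ_t$: for $t=\seq{}$ this is the axiom (T\ref{tax:root}); for $t$ not of the form $\seq{i}\cat t'$ there is nothing to check; and for $t=\seq{i}\cat t'$ this follows from $\psi_i$ preserving $\EQ_{t'}$ in $\LX{T_i}$. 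This gives a two-sided inverse to $\Phi$, so $\Phi$ is a bijective homomorphism. Finally, $\Phi$ is a homeomorphism because the pointwise convergence topology on $\Aut(\LX{T})$ restricts on each block to the pointwise convergence topology on $\Aut(\LX{T_i})$, and these block topologies are independent.

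The only step requiring real care is the observation that automorphisms preserve the block decomposition setwise; once that is in place, everything else is a routine bookkeeping check against the axiom groups (T\ref{tax:equiv})--(T\ref{tax:sum}). I do not anticipate a genuine obstacle here.
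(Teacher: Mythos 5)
Your proof is correct and is exactly the argument the paper has in mind; the paper states this lemma as a ``simple observation'' and omits the proof entirely, so your writeup simply supplies the intended details (block decomposition via the definable domains $D_{\seq{i}}$, restriction and gluing, and the check that the bijection is a homeomorphism for the pointwise-convergence topologies). No gaps.
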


\noindent 
Since extremely amenable groups are closed under products
\cite[Lemma~6.7]{KechrisPestovTodorcevic05}, it follows that if
$\Aut(\LX{T_1}),\dots,\Aut(\LX{T_k})$ are extremely amenable then so
is $\Aut(\LX{s(T_1,\dots,T_k)})$.

Shuffles require a more subtle argument. We begin with this
observation, which the main part of the proof of
\cite[Lemma~8.4]{KechrisPestovTodorcevic05}.

\begin{lem}
  For every positive integer $k$, the group $H_k$ is extremely
  amenable, where $H_k$ consists of all order automorphisms of
  $\Q_{[k]} = \Q_1 \cup \cdots \cup \Q_k$ that preserve each $\Q_i$
  setwise.
\end{lem}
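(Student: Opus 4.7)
The plan is to realize $(\Q_{[k]}, <, \Q_1, \dots, \Q_k)$ as a Fra\"{\i}ss\'{e} limit and invoke the Kechris--Pestov--Todorcevic correspondence. Let $\mathcal{L}_k$ denote the class of finite linear orders whose elements are partitioned into $k$ (possibly empty) ``color'' classes. This is a Fra\"{\i}ss\'{e} class---hereditary, with joint embedding, and with amalgamation obtained by interleaving the two overhangs above a common substructure while respecting colors. A routine back-and-forth argument using the density of each $\Q_i$ inside $\Q$ identifies $(\Q_{[k]}, <, \Q_1, \dots, \Q_k)$ as the unique countable ultrahomogeneous limit of $\mathcal{L}_k$, so $\Aut(\Q_{[k]}, <, \Q_1, \dots, \Q_k) = H_k$.

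Since every finite linear order is rigid, the KPT theorem reduces extreme amenability of $H_k$ to the Ramsey property of $\mathcal{L}_k$: for every $A, B \in \mathcal{L}_k$ and every $r \geq 1$, there must exist $C \in \mathcal{L}_k$ such that every $r$-coloring of the copies of $A$ in $C$ admits a copy of $B$ all of whose $A$-subcopies share a single color.

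I would establish this Ramsey property by a reduction to Ramsey's classical theorem. An embedding of $A$ into a labeled order $C$ is determined by an $|A|$-element subset of $C$ whose induced label-sequence (read in the order of $C$) agrees with that of $A$. Taking $C$ to be a long concatenation of ``blocks,'' each block containing one element of each label in a fixed order, embeddings of $A$ correspond to monotone selections of positions indexed by blocks and labels; an inductive application of Ramsey's theorem---adding the elements of $B$ one at a time and at each stage passing to a long subsequence of blocks on which the induced coloring is constant---produces the desired monochromatic copy of $B$ once $C$ is long enough. The main obstacle is the bookkeeping in this last step: one must ensure that each inductive stage leaves enough blocks to accommodate the remaining elements of $B$, and that the final selection assembles into a genuine copy of $B$ with the correct interleaving of labels. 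Once a suitable block structure is fixed, however, the argument is essentially the standard pigeonhole-plus-induction proof of Ramsey's theorem augmented with label-tracking.
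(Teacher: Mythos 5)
Your route is the right one, and it is essentially the route of the source the paper leans on: the paper gives no proof of this lemma, citing it as the main part of the proof of Lemma~8.4 of Kechris--Pestov--Todorcevic, whose argument is exactly your reduction --- identify $(\Q_{[k]},<,\Q_1,\dots,\Q_k)$ as the Fra\"{\i}ss\'{e} limit of the order class $\mathcal{L}_k$ of finite $k$-labelled linear orders, so that $H_k$ is the automorphism group of that limit, and then convert extreme amenability into the Ramsey property of $\mathcal{L}_k$, which is in turn reduced to the classical finite Ramsey theorem. Your identification of the limit is correct (each $\Q_i$ is dense in $\Q$, so a back-and-forth gives ultrahomogeneity and the age is all of $\mathcal{L}_k$), amalgamation for $\mathcal{L}_k$ is unproblematic, and using the KPT correspondence in the direction ``Ramsey property implies extreme amenability'' creates no circularity with the paper's later use of the converse direction in Corollary~5.6, since you establish the Ramsey property of $\mathcal{L}_k$ by purely finite combinatorics.

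The only genuinely under-specified step is the last one, and there you are making it harder than necessary: no induction over the elements of $B$ is needed, and the bookkeeping you worry about disappears if you apply Ramsey's theorem exactly once. Write the label words of $A$ and $B$ as $c_1\cdots c_a$ and $d_1\cdots d_b$, let $C_M$ be the concatenation of $M$ blocks each containing one point of each of the $k$ labels in a fixed order, and for an $a$-set $S=\set{n_1<\cdots<n_a}\subseteq\set{1,\dots,M}$ let $\phi(S)$ be the copy of $A$ whose $j$-th point is the $c_j$-labelled point of block $n_j$; this is a well-defined injection of $a$-sets into $\binom{C_M}{A}$. Given an $r$-coloring $\chi$ of $\binom{C_M}{A}$, pull it back through $\phi$ and, for $M$ at least the Ramsey number for $b$-sets, extract $H=\set{m_1<\cdots<m_b}$ on which $\chi\circ\phi$ is constant. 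The copy $B'$ of $B$ whose $j$-th point is the $d_j$-labelled point of block $m_j$ then has the key property that \emph{every} copy of $A$ inside $B'$ is of the form $\phi(S)$ with $S\subseteq H$: such a sub-copy selects indices $j_1<\cdots<j_a$ with $d_{j_\iota}=c_\iota$, and its points lie one per block in the distinct blocks $m_{j_1}<\cdots<m_{j_a}$. Hence $B'$ is $\chi$-monochromatic, which closes the gap in your sketch.
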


\noindent
The heart of the proof is the following key fact.

\begin{lem}
  If $T = \sigma(T_1,\dots,T_k)$ then $\Aut(\LX{T}) \cong H_k \ltimes
  G$ where
  \begin{equation*}
    G = {\textstyle\prod_{q \in \Q_{[k]}}} \Aut(\LX{T_{\#(q)}})
    \cong \left(\Aut(\LX{T_1}) \times\cdots\times \Aut(\LX{T_k})\right)^\omega
  \end{equation*}
  and $H_k$ acts on $G$ by permuting the index set $\Q_{[k]} = \Q_1
  \cup \cdots \cup \Q_k$.
\end{lem}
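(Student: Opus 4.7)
\textit{Plan.} The goal is to build a split short exact sequence
\[ 1 \longrightarrow G \longrightarrow \Aut(\LX{T}) \xrightarrow{\;\pi\;} H_k \longrightarrow 1 \]
whose associated conjugation action of $H_k$ on $G$ matches the permutation action described in the statement. The key structural observation is that every element of $\LX{T}$ has the form $\seq{r_0}\cat\bar{s}$ with $r_0 \in \Q_{[k]}$ and $\bar{s}$ lying in a natural copy of $\LX{T_{\#(r_0)}}$, and that the $\seq{i}$-intervals of $\LX{T}$ are exactly the fibers of the first-coordinate projection over the points of $\Q_i$.

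To define $\pi$ I would argue that any $\phi \in \Aut(\LX{T})$ preserves each relation $\EQ_{\seq{i}}$ and the total order, so it induces an order-preserving bijection $\pi(\phi)$ of $\Q_{[k]}$ that sends each $\Q_i$ to itself; thus $\pi(\phi) \in H_k$ and $\pi$ is a homomorphism. The kernel of $\pi$ consists of the automorphisms that fix the first coordinate of every sequence; each such automorphism restricts to an automorphism of the fiber above every $q \in \Q_{[k]}$, and conversely any family of such fiberwise automorphisms patches to an element of $\ker\pi$ (e.g.\ by applying Proposition~\ref{prp:mcdonaldext} one fiber at a time). This identifies $\ker\pi \cong \prod_{q \in \Q_{[k]}}\Aut(\LX{T_{\#(q)}}) = G$, and the $\omega$-power formulation follows from $|\Q_i| = \aleph_0$.

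For the splitting I would define $s \colon H_k \to \Aut(\LX{T})$ by $s(h)(\seq{r_0}\cat\bar{s}) = \seq{h(r_0)}\cat\bar{s}$. Since $h \in H_k$ preserves each $\Q_i$ setwise we have $\#(h(r_0)) = \#(r_0)$, so $s(h)$ genuinely maps $\LX{T}$ to itself; preservation of the lexicographic order and of each $\EQ_t$ is then routine. The map $s$ is a group homomorphism with $\pi\circ s = \mathrm{id}_{H_k}$, which both proves the surjectivity of $\pi$ and realizes $\Aut(\LX{T})$ as the internal semidirect product $s(H_k)\ltimes\ker\pi$. A direct computation then shows that conjugation by $s(h)$ sends $(g_q)_{q\in\Q_{[k]}}\in G$ to $(g_{h^{-1}(q)})_{q\in\Q_{[k]}}$, which is precisely the stated permutation action.

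The only delicate point is verifying that $\pi(\phi)$ really does land in $H_k$ rather than in the larger order-automorphism group of $\Q_{[k]}$; this amounts to checking that $\phi$ cannot carry a first coordinate in $\Q_i$ to one in some different $\Q_j$, which is forced by $\phi$'s preservation of $\EQ_{\seq{i}}$ together with the identification of $\seq{i}$-intervals as preimages of $\Q_i$ under the first-coordinate projection. Once that is granted, the remaining verifications reduce to routine checks against axiom groups (T\ref{tax:equiv})--(T\ref{tax:sum}).
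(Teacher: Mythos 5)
Your proposal is correct and follows essentially the same route as the paper: the homomorphism onto $H_k$ induced by the action on first coordinates, the section that permutes only the first coordinate, and the identification of the kernel with the product $\prod_{q \in \Q_{[k]}}\Aut(\LX{T_{\#(q)}})$ via the fiberwise restrictions. The paper's proof is terser but structurally identical, so no further comparison is needed.
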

  
\begin{proof}
  For each $q \in \Q_{[k]}$, let $I_q$ be the interval of $\LX{T}$
  consisting of elements with first coordinate $q$. Note that deleting
  the first coordinate gives a natural isomorphism $I_q \cong
  \LX{T_{\#(q)}}$.
  
  Since every automorphism of $\LX{T}$ maps each interval $I_q$ onto a
  similar interval $I_{q'}$, we have a natural homomorphism
  $h:\Aut(\LX{T}) \to H_k$ defined by the relation
  \begin{equation*}
    h(\alpha)(x(0)) = \alpha(x)(0)
  \end{equation*}
  for all $x \in \LX{T}$. Moreover, $h$ has a right inverse $s:H_k \to
  \Aut(\LX{T})$ where, for each $\pi \in H_k$, $s(\pi)$ acts on the
  first coordinate according to $\pi$ but leaves all other coordinates
  unchanged.
  
  The kernel of $h$ is the set
  \begin{equation*}
    K = \{ \alpha \in \Aut(\LX{T}) : \text{$\alpha(x)(0) = x(0)$ for
      every $x \in \LX{T}$}  \}.
  \end{equation*}
  Thus the restriction of an element $\alpha \in K$ to $I_q$ is an
  automorphism of $I_q$. Pasting these restrictions together and
  piping them through the natural isomorphisms $I_q \cong
  \LX{T_{\#(q)}}$ yields isomorphisms
  \begin{equation*}
    \ker(h) \cong \prod_{q \in \Q_{[k]}} \Aut(I_q) 
    \cong \prod_{q \in \Q_{[k]}} \Aut(\LX{T_{\#(q)}}) = G.
  \end{equation*}
  It follows at once that $\Aut(\LX{T}) \cong H_k \ltimes G$, as
  described in the statement of the lemma.
\end{proof}
  
\noindent
To finish the proof of Theorem~\ref{thm:gubkinaut}, we appeal again to
\cite[Lemma~6.7]{KechrisPestovTodorcevic05} where it is shown that
extremely amenable groups are closed under arbitrary products and
semidirect products.  It follows that if
$\Aut(\LX{T_1}),\dots,\Aut(\LX{T_k})$ are extremely amenable, then so
is $\Aut(\LX{\sigma(T_1,\dots,T_k)})$.

\section{Canonical Tree Presentations}\label{sec:canonical}

In this section, we will establish the final step in the proof of
Theorem~\ref{thm:extreme}.  A tree presentation $T$ is said to be
\term{canonical} if every automorphism of $\LO{T}$ is also an
automorphism of $\LX{T}$, hence $\Aut(\LO{T}) = \Aut(\LX{T})$.  Since
$\Aut(\LX{T})$ is known to be extremely amenable, it follows that
$\Aut(\LO{T})$ is extremely amenable too.

\begin{thm}\label{thm:gubkincan}
  Every countably categorical linear order has a canonical tree
  presentation.
\end{thm}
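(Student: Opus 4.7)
The strategy has three steps: first, define a syntactic notion of canonical tree presentation through irredundancy conditions on each internal node; second, exhibit a rewriting procedure that turns any tree presentation into a canonical one representing the same order type; and third, show that canonicity of $T$ forces every order-automorphism of $\LO{T}$ to preserve each partial equivalence relation $\EQ_t$, whence $\Aut(\LO{T}) = \Aut(\LX{T})$. Combined with Rosenstein's theorem, which supplies an initial tree presentation for any countably categorical linear order $L$, these steps yield the claim.

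I would declare $T$ canonical when every internal node is syntactically irredundant: each sum node has at least two children, no sum child, and no adjacent pair of children whose concatenation has a simpler (non-sum) representation of the same order type; each shuffle node has no shuffle child and its children have pairwise distinct order types. To reduce an arbitrary tree presentation to this form, apply the local moves $s(U) \to U$; flatten nested sums and nested shuffles by associativity (the shuffle case using that a dense mix of dense mixes is again a dense mix); delete duplicate children from a shuffle; and, whenever adjacent sum children admit an order-type-preserving merger, replace them by the canonical representation of their concatenation. Each move preserves the order type, and a syntactic measure on tree size ensures termination.

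The substantive content is the third step, proved by induction on the depth of $t$ in $T$. The root case is immediate since $\EQ_{\seq{}}$ is universal. For the inductive step at a node $t$ whose children correspond to canonical subtrees $T_1, \ldots, T_k$: by the inductive hypothesis, any $\alpha \in \Aut(\LO{T})$ preserves $\EQ_t$ and so permutes $t$-intervals; we must show the finer partition of each $t$-interval into $t\cat\seq{i}$-intervals is also preserved. When $t$ is a sum node, canonicity renders each interior cut order-definable and hence fixed by $\alpha$. When $t$ is a shuffle node, pairwise distinctness of the $T_i$ together with the ban on shuffle children pins each $t\cat\seq{i}$-interval down as the unique maximal convex subset of its $t$-interval of order type $T_i$ containing a given point, the ban on shuffle children preventing a $T_i$-interval from being extended within a sibling slot of the same ``absorbing'' type.

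The principal obstacle is making the ``admits a merger'' condition on adjacent sum children both precise and operational: one must show that when a merger is possible, the concatenation has an explicit canonical non-sum representation enabling the rewrite, and that when no such representation exists, the separating cut genuinely is order-definable. I expect this to require a structural lemma, proved jointly with the main induction, classifying the countably categorical orders that appear as concatenations of two shuffles with overlapping child-type sets — such concatenations should themselves be shuffles over the union of child types, which is what justifies the merger step.
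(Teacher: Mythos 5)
Your architecture---a syntactic normal form, a rewriting procedure, and a proof that the normal form forces $\Aut(\LO{T})=\Aut(\LX{T})$---is genuinely different from the paper's, which never defines a syntactic notion of canonicity at all: it builds the tree directly from $L$ by alternating Rosenstein's finite condensation with a label condensation into maximal homogeneous intervals, carrying along at each stage an isomorphism $\Aut(L)\cong G_i\ltimes\prod_{q\in D_i}\Aut(\LX{T_q})$. Since each condensation is defined purely from the order relation, every level of the resulting tree is automatically automorphism-invariant and canonicity falls out of the bookkeeping. The difference matters because your syntactic conditions are not sufficient. Consider $T=\sigma(s(1,\sigma(1)),1)$, a dense shuffle of copies of $1+\eta$ (where $\eta$ denotes the order type of $\Q$) with singletons. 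The shuffle node has no shuffle child and its two children have distinct order types; the sum node $s(1,\sigma(1))$ has two children, neither a sum node, and their concatenation $1+\eta$ has a least element, hence is neither a point nor a shuffle, so no merger is available. Thus $T$ passes every one of your irredundancy tests. But $\LO{T}$ is countable, dense, and without endpoints, hence order-isomorphic to $\Q$; its automorphism group acts transitively and cannot preserve the block partition, so $T$ is not canonical and the inductive step of your third stage fails at the root. A similar failure occurs for $s(\sigma(1),1,\sigma(1))\cong\sigma(1)$, where no adjacent \emph{pair} of children merges but the triple collapses.

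This is precisely the ``principal obstacle'' you flag at the end, and it is not a loose end but the entire content of the theorem: the absorption phenomenon is not confined to shuffle children of shuffles, nor to adjacent pairs under a sum. Your conjectured structural lemma (concatenations of two shuffles with overlapping child types are shuffles over the union) does not cover either example above, where the absorbed block is a sum or a leaf, or where absorption needs three children at once. To repair the approach you would need a full characterization of when a generalized sum over a dense index set is order-isomorphic to one that forgets part of its block structure---which is essentially Rosenstein's analysis, and is what the paper imports by constructing the tree from the order's intrinsic condensation sequence rather than normalizing an arbitrary presentation.
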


Here is an outline of the proof. Given a countably categorical linear
order $L$ we will inductively construct a sequence $D_1,\dots,D_k$ of
dense linear orders (possibly with endpoints and possibly
trivial). Each $D_i$ will be equipped with a labeling that assigns to
each point $q \in D_i$ a tree presentation $T_q$. At each stage, we
will have an isomorphism $h_i: L \cong \sum_{q \in D_i} \LO{T_q}$.
The final linear order $D_k$ will be trivial, so $h_k$ will be an
isomorphism from $L$ onto $\LO{T_\star}$, where ${\star}$ is the
unique element of $D_k$.

To ensure that $T_\star$ is a canonical tree presentation for $L$, we
will show that at each stage $h_i$ induces an isomorphism 
\begin{equation*}
  \textstyle
  \hat{h}_i: \Aut(L) \cong G_i \ltimes \prod_{q \in D_i} \Aut(\LX{T_q}),
\end{equation*}
where $G_i$ is the group of automorphisms of $D_i$ that preserve the
labeling $q \in D_i \mapsto T_q$ and $G_i$ acts on $\prod_{q \in D_i}
\Aut(\LX{T_q})$ by permuting the indices.  At the last stage, $G_k$ is
trivial and hence $h_k: L \cong \LO{T_\star}$ induces an isomorphism
$\Aut(L) \cong \Aut(\LX{T_\star})$, which will show that $T_\star$ is
a canonical tree presentation of~$L$.

The method for constructing the dense linear orders $D_1,\dots,D_k$
was developed by Rosenstein. We will appeal
to the proof of~\cite[Theorem~8.40]{Rosenstein82} for some useful facts about the
construction, but we need to recall the main steps of the the construction
in some detail in order to establish the relevant facts about
automorphism groups. Our notation will diverge from
Rosenstein's, but the translation will always be clear.

The first dense linear order $D_1$ is the finite condensation of $L$,
i.e., $D_1$ is the collection of all maximal finite intervals of $L$
with the induced ordering. As observed by Rosenstein, every
element of $L$ is contained in a maximal finite interval of $L$, so we
have a unique isomorphism $h_1:L \cong \sum_{q \in D_1} \LO{T_q}$. The
labeling $q \in D_1 \mapsto T_q$ simply assigns to each maximal finite
interval $q$ the tree presentation of the finite linear order of
length $|q|$. An automorphism of $L$ must map each maximal finite
interval to a maximal finite interval of the same length and thus
corresponds to a unique element of $G_1$. Conversely, any $\alpha \in
G_1$ has a unique expansion to an automorphism of $L$ by mapping each
element of the finite interval $q$ to the corresponding element of
$\alpha(q)$. Since $\Aut(\LX{T_q})$ is trivial for each $q \in D_1$,
this correspondence gives an isomorphism
\begin{equation*}
  \textstyle
  \hat{h}_1:\Aut(L) \cong G_1 \cong G_1 \ltimes \prod_{q \in D_1} \Aut(\LX{T_q}).
\end{equation*}

The next dense linear orders are obtained by a two-stage process. We
first perform the label condensation of $D_i$ with respect to the
labeling $q \in D_i \mapsto T_q$ to obtain a linear order $E_i$.  We
say that an interval $I \subseteq D_i$ is \term{homogeneous} if it has
no endpoints and $\{ T_q : q \in I \} = \{ T_q : r < q < s\}$ holds
for all $r, s \in I$ with $r < s$. The linear order $E_i$ consists
of the collection of all maximal homogeneous intervals of $D_i$
together with all singleton intervals for elements of $D_i$ that are
not contained in any homogeneous interval of $D_i$.

To each $I \in E_i$, we assign a tree presentation $S_I$. Rosenstein
shows that the set $\{T_q : q \in D_i\}$ is always finite, so fix,
once and for all, an enumeration $T_1,\dots, T_n$ of this set. If $I$
is a singleton, say $I = \{q\}$, we simply define $S_I =
T_q$. Otherwise, we assign $S_I = \sigma(T_{i(1)},\dots,T_{i(k)})$
where $i(1) < \cdots < i(k)$ are such that $T_{i(1)},\dots,T_{i(k)}$
enumerates $\{ T_q : q \in I\}$. Note that if $I$ and $J$ are two
maximal homogeneous intervals such that $\{T_q : q \in I\} = \{T_r : r
\in J\}$ then $S_I = S_J$.

It is easy to see that if $I \in E_i$, then $\sum_{q \in I} \LO{T_q} \cong
\LO{S_I}$. Therefore,
\begin{equation*}
  \textstyle
  L \cong \sum_{q \in D_i} \LO{T_q}
  \cong \sum_{I \in E_i} \sum_{q \in I} \LO{T_q} 
  \cong \sum_{I \in E_i} \LO{S_I}.
\end{equation*}
Let $k_i:L \cong \sum_{I \in E_i} \LO{S_I}$ be the isomorphism just
described. There is more than one choice for $k_i$ but any choice
which respects the above decompositions will do. In particular, $k_i$
must be compatible with $h_i$, which realizes the first of these
decompositions, in the sense that $h_i(x) = (q,\bar{r})$ and $k_i(x) =
(I,\bar{s})$, then $q \in I$, if $I$ is a singleton then $\bar{s} =
\bar{r}$, and if $I$ is a maximal homogeneous interval then $\bar{s} =
\seq{s_0}\cat\bar{r}$.

Let $H_i$ be the group of automorphisms of $E_i$ that preserve the
labeling $I \in E_i \mapsto S_I$.  Each $\alpha \in G_i$ must map a
maximal homogeneous interval $I$ of $D_i$ to another maximal
homogeneous interval $\alpha(I)$ in such a way that $S_I =
S_{\alpha(I)}$. Similarly, if $q$ is not contained in any homogeneous
interval of $D_i$ then $\alpha(q)$ has the same property and
$S_{\{q\}} = T_q = T_{\alpha(q)} = S_{\{\alpha(q)\}}$. Therefore, we
have a group homomorphism $g_i:G_i \to H_i$.
Note that this homomorphism $g_i$ has a section $s_i:H_i \to G_i$
where each $\eta \in H_i$ is expanded to $s_i(\eta) \in G_i$ using
$h_i$ to select a canonical isomorphism between $I$ and $\eta(I)$.

The kernel of $g_i$ is the subgroup $K_i = \{\alpha \in G_i : (\forall
I \in E_i)(\alpha(I) = I)\}$. Observe that $G_i \cong H_i \ltimes K_i$
and that
\begin{equation*}
  \textstyle
  K_i \ltimes \prod_{q \in D_i} \Aut(\LX{T_q}) \cong \prod_{I \in E_i}
  \Aut(\LX{S_I}),
\end{equation*}
where $K_i$ acts on $\prod_{q \in D_i} \Aut(\LX{T_q})$ by permuting
the indices. By the induction hypothesis, the isomorphism $h_i$
induces an isomorphism
\begin{equation*}
  \textstyle
  \hat{h}_i:\Aut(L) \cong G_i \ltimes \prod_{q \in D_i} \Aut(\LX{T_q}).
\end{equation*}
It follows from the above computations that the isomorphism $k_i$
similarly induces an isomorphism
\begin{equation*}
  \textstyle
  \hat{k}_i:\Aut(L) \cong H_i \ltimes \prod_{I \in E_i} \Aut(\LX{S_I}).
\end{equation*}

Finally, to obtain $D_{i+1}$, we perform the finite condensation of
$E_i$. Rosenstein shows that every element of $E_i$ is contained in a
maximal finite interval of $E_i$, so $D_{i+1}$ is a well-defined dense
linear order.
To each $q \in D_{i+1}$ we define $T_q =
s(S_{I_1},\dots,S_{I_k})$, where $I_1 < \cdots < I_k$ is the
increasing enumeration of $q$. Any $\alpha \in H_i$ must map a maximal
finite interval $q$ of $E_i$ to a maximal finite interval $\alpha(q)$
of $E_i$ in such a way that $T_q = T_{\alpha(q)}$ and thus $\alpha$
corresponds to a unique element of $G_{i+1}$. Conversely, any $\alpha
\in G_{i+1}$ has a unique expansion to an element of $H_i$ by mapping
each element of the finite interval $q$ to the corresponding element
of $\alpha(q)$. Therefore, $G_{i+1} \cong H_i$ and since
\begin{equation*}
  \Aut(\LX{T_q}) = \Aut(\LX{S_{I_1}}) \times \cdots \times \Aut(\LX{S_{I_k}}),
\end{equation*}
where $I_1 < \cdots < I_k$ is the increasing enumeration of $q \in D_{i+1}$, we
have
\begin{equation*}
  \textstyle
  \prod_{q \in D_{i+1}} \Aut(\LX{T_q}) \cong \prod_{I \in E_i} \Aut(\LX{S_I}).
\end{equation*}
It follows immediately that 
\begin{equation*}
  \textstyle
  \hat{h}_{i+1}:\Aut(L) \cong G_{i+1} \ltimes \prod_{q \in D_{i+1}} \Aut(\LX{T_q}).
\end{equation*}

The only detail that remains is to show that this process must
eventually terminate by reaching a step $k$ where $D_k$ is
trivial --- this termination argument is also given by
Rosenstein.

\section{The Fra{\"\i}ss{\'e} Order Class $\FC{T}$}\label{sec:fraisse}

In Section~\ref{sec:axioms}, we have expanded the linear order
$\LO{T}$ by adding certain binary relations to form the structure
$\LX{T}$. In this section, we will show that this new structure
$\LX{T}$ can be regarded as a Fra{\"\i}ss{\'e} limit of a
Fra{\"\i}ss{\'e} order class $\FC{T}$. Before we prove this, let us
first briefly review the relevant parts of Fra{\"\i}ss{\'e} theory; a
detailed discussion can be found in~\cite{Hodges97}, for example.

Let $L$ be a first-order language with finitely many relation symbols
and no function symbols. A class $\mathcal{K}$ of finite
$L$-structures is a \term{Fra{\"\i}ss{\'e} class} if it satisfies the
following three properties.
\begin{description}
\item[Hereditary Property] If $A \in \mathcal{K}$ and $B \embed A$
  then $B \in \mathcal{K}$.
\item[Joint Embedding Property] If $A,B \in \mathcal{K}$ there is
  $C\in \mathcal{K}$ such that $A \embed C$ and $B \embed C$.
\item[Amalgamation Property] If $A,B,C \in \mathcal{K}$ are such that
  $f:A \embed B$, $g: A \embed C$, there are a $D \in \mathcal{K}$ and
  $f': B \embed D$, $g': C \embed D$ such that $f' \circ f = g' \circ
  g$.
\end{description}
If, moreover, there is a distinguished binary relation symbol ${<}$ in
$L$ which is interpreted as a linear order in every element of
$\mathcal{K}$, then we say that $\mathcal{K}$ is a
\term{Fra{\"\i}ss{\'e} order class}.

If $D$ is any $L$-structure, the \term{age} of $D$ is the class
$\Age(D)$ of finite $L$-structure of structures that can be embedded
into $D$.  It is clear that $\Age(D)$ satisfies the hereditary and
joint embedding properties.  We say that $D$ is
\term{ultrahomogeneous} every isomorphism between finite substructures
of $D$ can be extended to an automorphism of $D$; this condition
guarantees that $\Age(D)$ also satisfies the amalgamation
property. Hence, $\Age(D)$ is a Fra{\"\i}ss{\'e} class whenever $D$ is
an ultrahomogeneous $L$-structure.  The converse of this fact is the
basis of Fra{\"\i}ss{\'e} theory.

\begin{thm}[Fra{\"\i}ss{\'e}~\cite{Fraisse54}]\label{thm:fraisse}
  If $\mathcal{K}$ is a Fra{\"\i}ss{\'e} class of finite
  $L$-structures, then there is a countable ultrahomogeneous
  $L$-structure $D$, unique up to isomorphism, such that $\mathcal{K}$
  is the age of $D$.
\end{thm}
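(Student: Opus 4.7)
The plan is to construct $D$ as the union of a countable chain $A_0 \embed A_1 \embed \cdots$ of members of $\mathcal{K}$, arranged so that every isomorphism type in $\mathcal{K}$ is realized somewhere in $D$ and so that every one-point extension of every finite substructure already present can be completed inside $D$. Since $L$ has only finitely many relation symbols, there are only countably many isomorphism types of finite $L$-structures, so $\mathcal{K}$ has a countable enumeration $B_0, B_1, \ldots$ of representatives; this countability is what makes a single $\omega$-length construction suffice.

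For the construction I would enumerate in one master list all tasks of the form $(A, g \colon A \embed B)$ with $A$ a finite substructure appearing at some earlier stage and $B \in \mathcal{K}$, scheduled via a standard bookkeeping surjection $\omega \to \omega \times \omega$ so that every task posed at stage $m$ is handled at some later stage $n \geq m$. At the stage that handles $(A, g \colon A \embed B)$ with $A \embed A_n$, I would apply the amalgamation property to the span consisting of the inclusion $A \embed A_n$ and the embedding $g \colon A \embed B$, producing $A_{n+1} \in \mathcal{K}$ that extends $A_n$ and contains an image of $B$ agreeing with $A_n$ on $A$. The joint embedding property is used at the base step and interleaved thereafter to guarantee that every $B_n$ gets embedded into some $A_m$. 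Setting $D = \bigcup_n A_n$, the hereditary property yields $\Age(D) \subseteq \mathcal{K}$ while the joint-embedding steps yield $\mathcal{K} \subseteq \Age(D)$, and the construction delivers the following \emph{extension property}: for every finite $A \embed D$ and every $g \colon A \embed B$ with $B \in \mathcal{K}$, there is an embedding $B \embed D$ extending the inclusion $A \embed D$.

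Ultrahomogeneity then follows by a back-and-forth argument: given an isomorphism $p \colon A \to A'$ between finite substructures of $D$, enumerate $D$ and alternately, from each side, pick the next unhandled element, form the finite substructure it generates together with the current domain, view the induced configuration on the opposite side as an extension task, and invoke the extension property to realize it inside $D$. The union of the resulting chain of partial isomorphisms is an automorphism of $D$ extending $p$. The very same back-and-forth, run between two countable ultrahomogeneous $L$-structures that share the age $\mathcal{K}$, produces an isomorphism between them and so delivers the uniqueness clause.

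The main obstacle is arranging the bookkeeping correctly: new finite substructures of the eventual $D$ keep appearing as the chain grows, so the enumeration of tasks cannot be fixed in advance and must instead be dovetailed against the construction using the $\omega \to \omega \times \omega$ surjection. Once the scheduling is in place, each individual amalgamation step is a routine application of the amalgamation property and the verification that $\Age(D) = \mathcal{K}$ together with the extension property becomes a direct check, after which the back-and-forth yields both ultrahomogeneity and uniqueness without further effort.
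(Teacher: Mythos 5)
The paper does not prove this statement---it is quoted as Fra{\"\i}ss{\'e}'s classical theorem with a citation to \cite{Fraisse54}---so there is no in-paper argument to compare against; your proof is the standard construction (a dovetailed chain of amalgamations yielding the extension property, followed by back-and-forth for ultrahomogeneity and uniqueness) and it is correct. The only detail you elide is in the uniqueness clause: to run back-and-forth between two arbitrary countable ultrahomogeneous structures with age $\mathcal{K}$, you need the extension property for each of them, which does not come from your construction but must be derived from ultrahomogeneity together with the hypothesis $\Age(D)=\mathcal{K}$ (embed $B$ anywhere into $D$, then use an automorphism to move the copy of $A$ inside that image back onto the given $A$); this is a one-line fix and does not affect correctness.
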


\noindent
The unique countable structure $D$ of Theorem~\ref{thm:fraisse} is
called the \term{Fra{\"\i}ss{\'e} limit} of the class $\mathcal{K}$.

A standard back-and-forth argument using
Proposition~\ref{prp:mcdonaldext} shows that:

\begin{prp}\label{prp:riveracrd}
  For every tree presentation $T$, the expanded structure $\LX{T}$ is
  ultrahomogeneous.
\end{prp}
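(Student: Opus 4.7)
The plan is to derive ultrahomogeneity by a standard back-and-forth argument, with Proposition~\ref{prp:mcdonaldext} supplying the key one-point extension step. I would begin by fixing an isomorphism $f_0 : A_0 \to B_0$ between finite substructures of $\LX{T}$ and enumerating the countable set $\LX{T}$ as $\seq{a_n : n < \omega}$, then inductively build an increasing chain of finite partial isomorphisms $f_0 \subseteq f_1 \subseteq f_2 \subseteq \cdots$ between finite substructures of $\LX{T}$, arranging that $a_n$ lies in $\mathrm{dom}(f_{2n+1})$ (the forth step) and in $\mathrm{range}(f_{2n+2})$ (the back step). The union $\bigcup_n f_n$ is then a bijection $\LX{T} \to \LX{T}$ preserving $<$ and every $\EQ_t$, hence an automorphism extending $f_0$.

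For the forth step at stage $n$, I would view $f_n$ as an embedding $e : A_n \embed \LX{T}$ and apply Proposition~\ref{prp:mcdonaldext}, taking the ambient countable structure there to be $\LX{T}$ itself, which trivially satisfies (T\ref{tax:equiv})--(T\ref{tax:sum}). This produces an extension of $e$ over any desired additional point $a$. Strictly speaking, what is invoked is the one-point extension established in the first line of the proof of Proposition~\ref{prp:mcdonaldext}, rather than the full statement; this is what the back-and-forth actually needs. The back step is entirely symmetric: apply the same one-point extension to $f_n^{-1}$ in order to place a prescribed target element into the range of $f_{n+1}$.

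The ``hard part'' has really been absorbed into Section~\ref{sec:axioms}: the delicate coordinate-choosing case analysis at sum-nodes and shuffle-nodes inside the proof of Proposition~\ref{prp:mcdonaldext} is exactly what makes both directions of the back-and-forth succeed, and it depends crucially on the density of each $\Q_i$ to realise the needed new shuffle-coordinate. Beyond invoking Proposition~\ref{prp:mcdonaldext} twice per stage and noting symmetry, no further calculation is required, so the remaining write-up should be short.
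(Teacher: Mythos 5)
Your proposal is correct and is exactly the argument the paper intends: the paper proves Proposition~\ref{prp:riveracrd} by the same standard back-and-forth, citing the one-point extension step from the proof of Proposition~\ref{prp:mcdonaldext} (applied to $L = \LX{T}$, which satisfies (T\ref{tax:equiv}--T\ref{tax:sum})) for both the forth and the symmetric back steps. Nothing is missing.
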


\noindent
Denote by $\mathcal{K}_T$ the age of $\LX{T}$. It follows from
Theorem~\ref{prp:riveracrd} that $\mathcal{K}_T$ is a Fra{\"\i}ss{\'e}
order class. Thus, by Proposition~\ref{prp:mcdonaldext}, the class
$\mathcal{K}_T$ is precisely the class of finite ordered structures
that satisfy the axioms {\upshape(T\ref{tax:equiv}--T\ref{tax:sum})}
of Section~\ref{sec:axioms}.
 
\begin{thm}\label{thm:fraisselimit} 
  For every tree presentation $T$, the class $\FC{T}$ is a
  Fra{\"\i}ss{\'e} order class and $\LX{T}$ is its Fra{\"\i}ss{\'e}
  limit.
\end{thm}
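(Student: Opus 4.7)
The plan is to verify the three Fraïssé properties for $\FC{T}$ and then invoke Theorem~\ref{thm:fraisse}. Since by definition $\FC{T} = \Age(\LX{T})$, two of the three properties come essentially for free: the hereditary property is immediate because a substructure of a structure embeddable into $\LX{T}$ is itself embeddable into $\LX{T}$, and the joint embedding property is immediate because any two members of $\FC{T}$ share the common cover $\LX{T}$ (take the finite substructure of $\LX{T}$ spanned by their two images). The linear order symbol ${<}$ is interpreted as the lexicographic order on $\LX{T}$ and hence as a linear order on every member of $\FC{T}$, so once we have amalgamation, we have a Fraïssé \emph{order} class.

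For amalgamation, I would use the standard derivation from ultrahomogeneity. Given $f : A \embed B$ and $g : A \embed C$ in $\FC{T}$, pick any embeddings $e_B : B \embed \LX{T}$ and $e_C : C \embed \LX{T}$. Then $e_B \circ f$ and $e_C \circ g$ are two embeddings of the finite structure $A$ into $\LX{T}$, so Proposition~\ref{prp:riveracrd} furnishes an automorphism $\varphi$ of $\LX{T}$ with $\varphi \circ e_C \circ g = e_B \circ f$. The finite substructure of $\LX{T}$ on the universe $e_B(B) \cup \varphi(e_C(C))$ lies in $\FC{T}$ and serves as the amalgam, with the embeddings $e_B : B \embed D$ and $\varphi \circ e_C : C \embed D$ agreeing on $A$. (Alternatively, one could build the amalgam more directly from Proposition~\ref{prp:mcdonaldext}, but appealing to ultrahomogeneity is cleaner.)

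Once $\FC{T}$ is known to be a Fraïssé order class, the identification of the limit is routine: $\LX{T}$ is countable (its universe is a subset of the countable set of finite sequences of rationals), ultrahomogeneous by Proposition~\ref{prp:riveracrd}, and has age $\FC{T}$ by the definition of $\FC{T}$; the uniqueness clause of Theorem~\ref{thm:fraisse} then forces $\LX{T}$ to be the Fraïssé limit of $\FC{T}$. There is no serious obstacle at this stage — all of the genuine work sits in Propositions~\ref{prp:mcdonaldext} and~\ref{prp:riveracrd}, and Theorem~\ref{thm:fraisselimit} is a packaging result that recasts what has already been proved in the vocabulary of Fraïssé theory, setting up the application of \cite{KechrisPestovTodorcevic05} needed for Corollary~\ref{cor:ramseyprop}.
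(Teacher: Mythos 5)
Your proposal is correct and follows the same route as the paper: the paper also observes that $\FC{T}$ is by definition the age of $\LX{T}$, that ages automatically satisfy the hereditary and joint embedding properties, that ultrahomogeneity (Proposition~\ref{prp:riveracrd}) yields amalgamation, and that uniqueness in Fra{\"\i}ss{\'e}'s theorem identifies $\LX{T}$ as the limit. You merely spell out the standard amalgamation-from-ultrahomogeneity argument in more detail than the paper, which leaves it implicit.
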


We will now turn to structural Ramsey theory.  Let $L$ be a
first-order language with finitely many relation symbols and no
function symbols. If $A, B$ are finite $L$-structures, we denote by
$\binom{B}{A}$ the set of all substructures of $B$ which are
isomorphic to $A$. If $C$ is another finite $L$-structure $k$ is a
positive integer, we write
\begin{equation*}
  C\rightarrow (B)^A_k
\end{equation*} 
if for every coloring $\chi: {\binom{C}{A}} \rightarrow \{1,2,...,k\}$
there exists $B' \in {\binom{C}{B}}$ such that the $\chi$ is constant
on ${\binom{B'}{A}}$. We say that the class $\mathcal{H}$ of finite
$L$-structures satisfies the \term{Ramsey property} if for any two
structures $A,B\in \mathcal{H}$ and every positive integer $k$, there
exists $C\in \mathcal{H}$ such that $C\rightarrow (B)^A_k$.  This
property was considered by Kechris, Pestov, and Todorcevic, who
characterized which Fra{\"\i}ss{\'e} order classes satisfy the Ramsey
property as follows.

\begin{thm}[Kechris--Pestov--Todorcevic~\cite{KechrisPestovTodorcevic05}]
  Let $\mathcal{K}$ be a Fra{\"\i}ss{\'e} order class with
  Fra{\"\i}ss{\'e} limit $D$.  Then $\Aut(D)$ is extremely amenable if
  and only if $\mathcal{K}$ has the Ramsey property.
\end{thm}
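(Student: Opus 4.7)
The plan is to prove the two implications separately, using product-space dynamics to bridge the combinatorics of colorings with the topological dynamics of $\Aut(D)$.

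For the forward direction, assume $\Aut(D)$ is extremely amenable. To derive the Ramsey property, fix $A, B \in \mathcal{K}$ and $k \geq 1$ and argue by contradiction. If no $C \in \mathcal{K}$ satisfies $C \to (B)^A_k$, then in particular for every finite substructure $C' \subseteq D$ containing $B$ there is a \emph{bad} coloring of $\binom{C'}{A}$ admitting no monochromatic copy of $B$ in $C'$. Equip $X = \{1,\dots,k\}^{\binom{D}{A}}$ with the product topology (making it compact) and the continuous $\Aut(D)$-action $(g \cdot \chi)(A') = \chi(g^{-1} A')$. A finite-intersection compactness argument, exhausting $D$ by an increasing chain of finite substructures and extending bad colorings arbitrarily, shows that the closed invariant set $Y \subseteq X$ of colorings of $\binom{D}{A}$ with no monochromatic copy of $B$ anywhere in $D$ is nonempty. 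Extreme amenability then produces a fixed point $\chi^* \in Y$. Since $D$ is ultrahomogeneous, $\Aut(D)$ acts transitively on $\binom{D}{A}$, so $\chi^*$ must be constant; but then every $B' \in \binom{D}{B}$ is monochromatic, contradicting $\chi^* \in Y$.

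For the backward direction, assume the Ramsey property and let $G = \Aut(D)$ act continuously on a compact Hausdorff space $X$. The neighborhood basis of the identity in $G$ consists of pointwise stabilizers $W_F = \{g \in G : g{\restrict}F = \mathrm{id}\}$ of finite substructures $F \subseteq D$; by ultrahomogeneity, the coset space $G/W_F$ is in natural bijection with $\binom{D}{F}$ after selecting a distinguished copy. Fix $x_0 \in X$. Given a finite open cover $\mathcal{V}$ of $X$ and a finite $F \subseteq D$, define a finite coloring $\chi\colon \binom{D}{F} \to \mathcal{V}$ sending each copy $F'$ to a canonically chosen cell containing $g x_0$, where $g$ realizes $F \cong F'$. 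Applying the Ramsey property to $\chi$ for an arbitrary target $B \in \mathcal{K}$ with $F \embed B$ produces a large $C \in \mathcal{K}$ on which $\chi$ has a monochromatic copy of $B$; lifted back to $G$, this supplies a group element $g$ such that $g W_F \cdot x_0$ lies in a single cell of $\mathcal{V}$. Taking a subnet limit along pairs $(\mathcal{V}, F)$ directed by refinement of $\mathcal{V}$ and growth of $F$ extracts $x^* \in \overline{G \cdot x_0}$ fixed by $W_F$ for every finite $F \subseteq D$, hence by all of $G$.

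The main obstacle is the backward direction. The coloring $\chi$ on $\binom{D}{F}$ is well-defined only after a canonical representative of each copy is chosen, and this is precisely where the linear-order assumption on a Fra{\"\i}ss{\'e} order class is used: the order $<$ provides a canonical enumeration of each finite substructure that pins down a representative $g$. In addition, the diagonal net argument must be organized so that the stabilizer of $x^*$ absorbs every finite pointwise stabilizer rather than merely those compatible with a single open cover; this requires a careful indexing and passing to convergent subnets inside a minimal $G$-invariant subflow of $\overline{G \cdot x_0}$. The forward direction, by comparison, is a clean application of the fixed-point property, with ultrahomogeneity supplying the transitivity on $\binom{D}{A}$ that converts $\Aut(D)$-invariance of $\chi^*$ into constancy.
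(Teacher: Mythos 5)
The paper does not prove this theorem; it is quoted verbatim from Kechris--Pestov--Todorcevic \cite{KechrisPestovTodorcevic05}, so there is no in-paper argument to compare yours against. Your forward direction is essentially the standard and correct one: compactness of $\{1,\dots,k\}^{\binom{D}{A}}$ and the finite intersection property yield a nonempty closed invariant set of colorings admitting no monochromatic copy of $B$, extreme amenability gives a fixed coloring, and rigidity of linearly ordered structures together with ultrahomogeneity makes $\Aut(D)$ act transitively on $\binom{D}{A}$, forcing the fixed coloring to be constant --- a contradiction.

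The backward direction has a genuine gap, located exactly where you placed your confidence. The linear order makes each finite structure rigid, so it pins down the isomorphism $F \cong F'$; it does \emph{not} pin down an automorphism $g$ of $D$ extending that isomorphism. Any two such extensions differ by an element of the pointwise stabilizer $W_F$, which is a non-compact open subgroup whose orbit $W_F \cdot x_0$ can be spread across every cell of $\mathcal{V}$. Hence your coloring $F' \mapsto$ ``cell containing $g x_0$'' depends on an arbitrary choice of extension, and --- more fatally --- the Ramsey property only controls the finitely many chosen points $g_{F'} x_0$ as $F'$ ranges over the copies of $F$ inside one copy of $B$; it cannot deliver the assertion that an entire coset orbit $g W_F \cdot x_0$ lands in a single cell, which is false in general. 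The underlying obstruction is that the functions $g \mapsto \phi(g x_0)$ arising from a flow are uniformly continuous on the wrong side: they need not be even approximately constant on the cosets $g W_F$, so they do not descend to colorings of $\binom{D}{F} \cong G/W_F$ at all. The actual proof in \cite{KechrisPestovTodorcevic05} first establishes (via the greatest ambit, i.e., the Samuel compactification of $G$) that extreme amenability of a group with a basis of open subgroups is equivalent to the following oscillation-stability statement: for every open subgroup $V$, every finite coloring of $G/V$, and every finite set of cosets $h_1 V, \dots, h_n V$, there is $g \in G$ with $g h_1 V, \dots, g h_n V$ monochromatic. That reduction is the real content of this direction and is absent from your sketch; only after it is in place does rigidity identify $G/W_F$ with $\binom{D}{F}$ and convert the statement into the Ramsey property.
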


\noindent
Since $\LX{T}$ is extremely amenable by Theorem~\ref{thm:gubkinaut}
and $\LX{T}$ is the Fra{\"\i}ss{\'e} limit of the Fra{\"\i}ss{\'e}
order class $\FC{T}$, we can apply the above theorem to obtain that
$\FC{T}$ has the Ramsey property.

\begin{cor}\label{cor:ramseyprop}
  For each tree presentation $T$, the Fra{\"\i}ss{\'e} order class
  $\FC{T}$ has the Ramsey property.
\end{cor}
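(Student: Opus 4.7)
The plan is to obtain this corollary as an immediate application of the Kechris--Pestov--Todorcevic theorem quoted just before the statement, feeding it the two main structural results already established in the paper.

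First I would invoke Theorem~\ref{thm:fraisselimit}, which guarantees that $\FC{T}$ is a Fraïssé order class with Fraïssé limit $\LX{T}$. This supplies the combinatorial input required by KPT: the language of $\FC{T}$ has only finitely many relation symbols (namely $=$, $<$, and one partial equivalence relation $\EQ_t$ for each node $t$ of the finite tree $T$); $\FC{T}$ has the hereditary, joint embedding, and amalgamation properties; and the distinguished binary relation $<$ is interpreted as a linear order in every structure of $\FC{T}$, since $<$ is already totally ordered in $\LX{T}$ and hence in each of its finite substructures.

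Second, I would invoke Theorem~\ref{thm:gubkinaut}, which supplies the topological-dynamical input: $\Aut(\LX{T})$ is extremely amenable. With the two hypotheses in hand, a direct application of the Kechris--Pestov--Todorcevic theorem, in the direction ``extreme amenability of $\Aut(D)$ implies the Ramsey property of $\mathcal{K}$,'' with $\mathcal{K} = \FC{T}$ and $D = \LX{T}$, yields precisely the conclusion of Corollary~\ref{cor:ramseyprop}.

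There is essentially no obstacle: the paper has been organized so that this corollary drops out as a one-line assembly of Theorems~\ref{thm:gubkinaut} and~\ref{thm:fraisselimit} through KPT. The only bookkeeping point worth a sentence is to confirm that $\FC{T}$ really is an order class in the sense of KPT --- that is, that $<$ is a symbol of the signature rather than an auxiliary structure --- which is clear from the axiomatization in Section~\ref{sec:axioms}, where $<$ appears explicitly among the primitives alongside the $\EQ_t$.
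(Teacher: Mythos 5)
Your proposal is correct and matches the paper's own argument exactly: the corollary is obtained by feeding Theorem~\ref{thm:fraisselimit} (that $\FC{T}$ is a Fra{\"\i}ss{\'e} order class with limit $\LX{T}$) and Theorem~\ref{thm:gubkinaut} (extreme amenability of $\Aut(\LX{T})$) into the Kechris--Pestov--Todorcevic characterization. No gaps; the bookkeeping remark about $<$ being part of the signature is accurate and consistent with Section~\ref{sec:axioms}.
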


As announced in the introduction, structures $\FC{T}$ can be seen as
convexly ordered ultrametric spaces whose open balls coincide with the
various $t$-classes of the structure. In fact, there is a precise
biinterpretation between $\FC{\sigma^n(1)}$ and the Fra{\"\i}ss{\'e}
order class
$\mathcal{U}_S^{c{<}}$ of finite convexly ordered
ultrametric spaces with distances in a fixed $n$-element set $S
\subseteq (0,\infty)$, as previously considered by Nguyen Van
The~\cite{NguyenVanThe09}.

To see how this correspondence works, suppose $S =
\set{s_1,\dots,s_n}$ where $s_0 = 0 < s_1 < \cdots < s_n$. Similarly,
suppose $t_0,\dots,t_n$ enumerates the tree presentation $\sigma^n(1)$
from the leaf $t_0 = \seq{1,\dots,1}$ to the root $t_n = \seq{}$.
A finite convexly ordered ultrametric space $(A,d,{<})$ with distances
in $S$ can be viewed as an element of $\FC{\sigma^n(1)}$ by definining the relations $x
\EQ_{t_i} y \IFF d(x,y) \leq s_i$.  Conversely, a structure
$(A,{\EQ_{t_0}},{\EQ_{t_0}},\dots,{\EQ_{t_n}},{<})$ in
$\FC{\sigma^n(1)}$ can be made into a convexly ordered ultrametric
space with distances in $S$ by defining $d(x,y) = s_i$ where $i$ is least such that $x \EQ_{t_i} y$.

This back and forth translation gives an equivalence between the
Fra{\"\i}ss{\'e} order classes $\mathcal{U}_S^{c{<}}$ and
$\FC{\sigma^n(1)}$.
Therefore, Corollary~\ref{cor:ramseyprop} gives the following.

\begin{cor}[Nguyen Van Th{\'e}~\cite{NguyenVanThe09}]
  Let $S$ be a finite set of positive real numbers.
  The Fra{\"\i}ss{\'e} order class $\mathcal{U}_S^{c{<}}$ has the
  Ramsey property.
\end{cor}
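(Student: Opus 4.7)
The plan is to deduce the corollary directly from Corollary~\ref{cor:ramseyprop} applied to the class $\FC{\sigma^n(1)}$, where $n = |S|$, by promoting the translation between $\mathcal{U}_S^{c{<}}$ and $\FC{\sigma^n(1)}$ described in the preceding paragraphs to a bijection on copies. Concretely, the goal is to check that the back-and-forth maps $F$ and $G$ are mutually inverse and preserve embeddings, so that a Ramsey witness in $\FC{\sigma^n(1)}$ transports back to a Ramsey witness in $\mathcal{U}_S^{c{<}}$.

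First I would fix enumerations $S = \set{s_1 < \cdots < s_n}$ (with $s_0 = 0$) and $t_0, t_1, \dots, t_n$ of the nodes of $\sigma^n(1)$ from the unique leaf $t_0 = \seq{1,\dots,1}$ to the root $t_n = \seq{}$. Given $(A, d, {<}) \in \mathcal{U}_S^{c{<}}$, define $F(A)$ on the same ordered set by declaring $x \EQ_{t_i} y \IFF d(x,y) \leq s_i$. The axioms of group (T\ref{tax:equiv}) follow from symmetry of $d$, the strong triangle inequality $d(x,z) \leq \max(d(x,y),d(y,z))$, and the convex ordering hypothesis; axiom (T\ref{tax:root}) holds because $d(x,y) \leq s_n$ always; (T\ref{tax:leaf}) holds because $s_0 = 0$ forces $\EQ_{t_0}$ to coincide with equality; the disjoint union axioms (T\ref{tax:child}) are trivial here since every non-leaf node of $\sigma^n(1)$ has a unique child; and the sum axioms (T\ref{tax:sum}) are vacuous. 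By Proposition~\ref{prp:mcdonaldext}, $F(A) \in \FC{\sigma^n(1)}$. Conversely, for $B \in \FC{\sigma^n(1)}$ define $G(B)$ on the same ordered set by $d(x,y) = s_i$ where $i$ is least such that $x \EQ_{t_i} y$; this is well-defined by (T\ref{tax:root}) and lands in $\set{s_0,\dots,s_n}$, with $d(x,y) = 0 \IFF x = y$ by (T\ref{tax:leaf}), so $d$ takes positive values in $S$ off the diagonal. The strong triangle inequality follows from transitivity of each $\EQ_{t_i}$, and convex ordering follows from the third clause of (T\ref{tax:equiv}).

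A routine verification then shows that $F$ and $G$ are mutually inverse and that a function $A \to B$ between underlying sets is an embedding in $\mathcal{U}_S^{c{<}}$ iff the same function is an embedding in $\FC{\sigma^n(1)}$; in particular, $\binom{B}{A}$ and $\binom{F(B)}{F(A)}$ are canonically in bijection for all $A, B \in \mathcal{U}_S^{c{<}}$. Given $A, B \in \mathcal{U}_S^{c{<}}$ and $k \geq 1$, Corollary~\ref{cor:ramseyprop} supplies some $C \in \FC{\sigma^n(1)}$ with $C \to (F(B))^{F(A)}_k$; transporting any $k$-coloring of $\binom{G(C)}{A}$ along the canonical bijection to a coloring of $\binom{C}{F(A)}$, the Ramsey witness in $C$ produces a monochromatic copy of $B$ inside $G(C)$, so $G(C) \to (B)^A_k$, completing the proof. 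The only real work is the axiom checking in the previous paragraph, which, while routine, is the main technical point: it amounts to the observation that the $\EQ_{t_i}$-classes are exactly the closed $d$-balls of radius $s_i$, so that the nested family of equivalence relations coded by the tree $\sigma^n(1)$ is precisely the datum of an ultrametric with distances in $S$.
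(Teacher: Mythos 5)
Your proposal is correct and follows the same route as the paper: the paper also deduces the corollary from Corollary~\ref{cor:ramseyprop} via the identical back-and-forth translation $x \EQ_{t_i} y \IFF d(x,y) \leq s_i$ and $d(x,y) = s_i$ for the least $i$ with $x \EQ_{t_i} y$, asserting that this gives an equivalence of Fra{\"\i}ss{\'e} order classes. You simply spell out the axiom verification and the transport of colorings that the paper leaves implicit.
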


\noindent
Nguyen Van Th{\'e} further shows that $\mathcal{U}_S^{c{<}}$ has the Ramsey
property even when $S$ is an infinite subset of $(0,\infty)$. This
general case does not correspond to a special case of
Corollary~\ref{cor:ramseyprop}, but one can derive this
more general result from the case where $S$ is finite.

\bibliographystyle{amsplain}
\bibliography{ramsey}

\providecommand{\bysame}{\leavevmode\hbox to3em{\hrulefill}\thinspace}
\providecommand{\MR}{\relax\ifhmode\unskip\space\fi MR }
\providecommand{\MRhref}[2]{%
  \href{http://www.ams.org/mathscinet-getitem?mr=#1}{#2}
}
\providecommand{\href}[2]{#2}
\begin{thebibliography}{1}

\bibitem{Fraisse54}
R.~Fra{\"{\i}}ss{\'e}, \emph{Sur l'extension aux relations de quelques
  propri\'et\'es des ordres}, Ann. Sci. Ecole Norm. Sup. (3) \textbf{71}
  (1954), 363--388.

\bibitem{Hodges97}
W.~Hodges, \emph{A shorter model theory}, Cambridge University Press,
  Cambridge, 1997.

\bibitem{KechrisPestovTodorcevic05}
A.~S. Kechris, V.~G. Pestov, and S.~Todorcevic, \emph{Fra{\"\i}ss{\'e} limits,
  {R}amsey theory, and topological dynamics of automorphism groups}, Geom.
  Funct. Anal. \textbf{15} (2005), no.~1, 106--189.

\bibitem{NguyenVanThe09}
L.~Nguyen Van~Th{\'e}, \emph{Ramsey degrees of finite ultrametric spaces,
  ultrametric {U}rysohn spaces and dynamics of their isometry groups}, European
  J. Combin. \textbf{30} (2009), no.~4, 934--945. \MR{2504653 (2010j:05418)}

\bibitem{Pestov98}
V.~G. Pestov, \emph{On free actions, minimal flows, and a problem by {E}llis},
  Trans. Amer. Math. Soc. \textbf{350} (1998), no.~10, 4149--4165.

\bibitem{Rado54}
R.~Rado, \emph{Direct decomposition of partitions}, J. London Math. Soc.
  \textbf{29} (1954), 71--83.

\bibitem{Ramsey30}
F.~P. Ramsey, \emph{On a problem of formal logic}, Proc. Lond. Math. Soc.
  \textbf{30} (1930), 264--286.

\bibitem{Rosenstein69}
J.~G. Rosenstein, \emph{{$\aleph \sb{0}$}-categoricity of linear orderings},
  Fund. Math. \textbf{64} (1969), 1--5.

\bibitem{Rosenstein82}
\bysame, \emph{Linear orderings}, Pure and Applied Mathematics, vol.~98,
  Academic Press, New York, 1982.

\end{thebibliography}

\end{document}